\title{Picard-Fuchs system for family of Kummer surfaces as subsystem of GKZ hypergeometric system}
\author{Atsuhira Nagano}
\def\bigzerou{\smash{\lower1.7ex\hbox{\b 0}}}
\newtheorem{thm}{Theorem}[section]
\newtheorem{lem}{Lemma}[section]
\newtheorem{prop}{Proposition}[section]
\newtheorem{rem}{Remark}[section]
\newtheorem{cor}{Corollary}[section]
\def\comment#1{{ }}
\begin{document}
\maketitle
\setlength{\baselineskip}{13 pt}
 \renewcommand{\thefootnote}{\fnsymbol{footnote}}

\begin{abstract}
We determine a simple expression of the Picard-Fuchs system for a family of all Kummer surfaces for principally polarized Abelian surfaces.
It is given by a system of linear partial differential equations in three variables of rank five.
Our results are based on a Jacobian elliptic fibration on Kummer surfaces and a GKZ hypergeometric system suited to the elliptic fibration.
\end{abstract}

\footnote[0]{Keywords:  Kummer surfaces ; Picard-Fuchs systems  ;  Hypergeometic functions.  }
\footnote[0]{Mathematics Subject Classification 2020:  Primary 14J28 ; Secondary  33C75, 14J27.}

\section*{Introduction}

A Kummer surface ${\rm Kum} (A)$ for an Abelian surface $A$ is a significant algebraic $K3$ surface.
It is classically known that ${\rm Kum} (A)$  is given by a quartic surface in the projective space $\mathbb{P}^3(\mathbb{C})$ with $16$ nodes.
Since the period points of marked Kummer surfaces are dense in the period domain of $K3$ surfaces,
they play an essential role in the proof of the Torelli theorem for $K3$ surfaces, which guarantees the injectivity of the period mapping,  by  Piateckii-Shapiro and  Shafarevich \cite{PS}. 
Also, Kummer surfaces are very interesting objects in number theory,
because they are closely related to modular forms and algebraic curves 
(for example, see Remark \ref{pqrRem}). 
There are various studies on motives of Kummer surfaces now
(for example, see \cite{ILP} or \cite{S}).
Thus, although Kummer surfaces are particular $K3$ surfaces,
they attract a number of researchers.
The purpose of this paper is  to obtain an explicit and simple expression of the Picard-Fuchs system of the family of ${\rm Kum} (A)$ for principally polarized Abelian surfaces $A$.

First, let us  recall the classical Picard-Fuchs equation for the family
$\pi_E: \{E(\lambda) \mid \lambda \in \mathbb{P}^1(\mathbb{C})-\{0,1,\infty \} \} \rightarrow \mathbb{P}^1(\mathbb{C})-\{0,1,\infty \}$
of elliptic curves 
\begin{align}\label{EllipticCurve}
E(\lambda) : w^2=v(v-1)(v-\lambda).
\end{align}
A unique  holomorphic $1$-form $\omega_\lambda $  on $E(\lambda)$ is given 
by $\frac{dv}{\sqrt{v(v-1)(v-\lambda)}}$ up to a constant factor.
By a direct calculation, $\omega_\lambda $ satisfies
$
\lambda (1- \lambda) \frac{\partial^2 \omega_\lambda}{\partial \lambda^2} +(1-2\lambda) \frac{\partial \omega_\lambda}{\partial \lambda} -\frac{1}{4} \omega_\lambda = d\left( \frac{\sqrt{v (v-1) (v-\lambda)}}{2(v-\lambda)^2} \right)
$
(for example, see \cite{SU} Section 2.4).
Since the right hand side of this relation is an exact form, 
the integral $\int_{\gamma_\lambda} \omega_\lambda$ is a solution of the differential equation 
$
\lambda (1- \lambda) \frac{d^2 u}{d \lambda^2} +(1-2\lambda) \frac{d u}{d \lambda} -\frac{1}{4} u = 0,
$
where $\gamma_\lambda$ is a $1$-cycle on $E(\lambda)$.
The integral $\int_{\gamma_\lambda} \omega_\lambda$ is a period integral on $E(\lambda)$.
This differential equation is called the Picard-Fuchs equation for the family $\pi_E$.
This is a special Gauss hypergeometric equation.

The family of Kummer surfaces ${\rm Kum}(A)$ for  principally polarized Abelian surfaces $A$ is regarded as a two-dimensional counterpart of the family $\pi_E$.
We can roughly grasp the Picard-Fuchs system for the family of  ${\rm Kum}(A)$ as follows.
Let $\mathbb{P}^2(\mathbb{C})={\rm Proj}(\mathbb{C}[\xi_1,\xi_2,\xi_3])$ be projective plane.
A double covering of $\mathbb{P}^2(\mathbb{C})$ branched along six lines is defined by the equation
$$
z^2 = \prod_{j=1}^6 (c_{1j} \xi_1+ c_{2j} \xi_2 + c_{3j} \xi_3).
$$
This equation defines a $K3 $ surface.
Such a  surface is called a $K3$ surface of type $(3,6)$.
The period mapping for the family of them is studied in   \cite{MSY}  in detail.
Especially, the Picard-Fuchs system for the family of such $K3$ surfaces is given by a system of relatively simple differential equations of holonomic rank six in four independent variables.
The system is called the hypergeometric system of type $(3,6)$. 
In fact, the family of ${\rm Kum}(A)$ for principally polarized Abelian surfaces $A$ is characterized as a subfamily of the family of $K3$ surfaces of type $(3,6)$.
Precisely, if six branch lines of a $K3$ surface of type $(3,6)$ tangent to a conic in $\mathbb{P}^2(\mathbb{C})$,
then we obtain ${\rm Kum}(A)$.
Conversely, any ${\rm Kum}(A)$ can be attained in such a manner.
Therefore, we can theoretically obtain the Picard-Fuchs system for the family of ${\rm Kum}(A)$, if we restrict  the hypergeometric equation of type $(3,6)$ to the relation corresponding to six tangent lines for a conic.
However, to the best of the author's knowledge, 
it is not easy to complete this procedure explicitly, because the calculations for that are much heavy and complicated.

In this paper, instead of the hypergeometric system of type $(3,6)$,
we will apply another idea and techniques to obtain an explicit and simple expression of the Picard-Fuchs system  for the family of Kummer surfaces ${\rm Kum}(A)$ for  principally polarized Abelian surfaces $A$.
The methods of this paper are mainly based on the two following mathematical facts:
(1) ${\rm Kum}(A)$ has a good Jacobian elliptic fibration;
(2) periods of ${\rm Kum}(A)$ satisfies a GKZ hypergeometric system.
Kumar \cite{Kumar} shows that there are 25 different Jacobian elliptic fibrations on ${\rm Kum} (A)$.
In this paper, we bring up one of them.
The Weierstrass model of the elliptic fibration is given by 
\begin{align}\label{KummerEqIntro}
K(p,q,r): y^2 =x(x+t^2) (x+t^3 +p t^2 + q t +r).
\end{align}
Here, $(p,q,r) \in \mathbb{C}^3$ is a tuple of parameters which gives a deformation of $K3$ surfaces  (for detail, see Section 1.1).
In fact, expressions of period integrals induced from (\ref{KummerEqIntro}) has a good compatibility with the theory of GKZ hypergeometric systems.
By applying that theory, we can see that the period integrals are solutions of a system of linear partial differential equations 
\begin{align}\label{GKZpqrIntro}
\begin{cases}
&\displaystyle q^2 \theta_p \theta_r  u= pr \theta_q (\theta_q -1)u  ,\\
&\displaystyle  p^2 \theta_q(\theta_q + 2 \theta_r) u =  q \theta_p(\theta_p -1) u, \\
&\displaystyle  \theta_p (\theta_p+2\theta_q + 3 \theta_r) u = p \left(\theta_p + 2\theta_q+3\theta_r+\frac{1}{2}\right)^2 u,\\
&\displaystyle  p q \theta_r(\theta_q+2\theta_r)u = r \theta_p \theta_q u.
\end{cases}
\end{align}
This system is corresponding to a certain GKZ system (see Section 2.1).
Here, $\theta_p,\theta_q,\theta_r$ are the Euler operators.
We can prove that the holonomic rank of (\ref{GKZpqrIntro}) is equal to six.
This fact implies that
the system (\ref{GKZpqrIntro}) is not eligible for the Picard-Fuchs system for the family of ${\rm Kum}(A)$,
whose holonomic rank  must be five (see Section 1.2).
It means that we can exhume the Picard-Fuchs system in question as a subsystem of the GKZ hypergeometric system (\ref{GKZpqrIntro}).
In order to obtain it explicitly,
we will use another property of the elliptic fibration (\ref{KummerEqIntro}).
A period integral for the family of (\ref{KummerEqIntro}) has an expression via the Euler integral expression of the Gauss hypergeometric function.
By making full use of such a property,
we will obtain a power series expansion of a period integral (Proposition \ref{PropSer}).
We are able to determine a unique partial linear differential equation,
which satisfies the power series but is not derived from the GKZ system (\ref{GKZpqrIntro}).
This is explicitly described as follows:
\begin{align}\label{NewDiffEqIntro}
&9 q r \theta_p( 1+ 2 \theta_r) u 
-  4 p r \theta_q (2 \theta_q + 3 \theta_r)u 
-  4 p^2 q \theta_r (\theta_q + 2 \theta_r) u \notag \\
&\quad\quad +  4 p^2 r \theta_q (\theta_p + 4 \theta_q + 6 \theta_r) u 
+  p q^2 \theta_r (1+ 16 \theta_q  + 30 \theta_r)u =0.
\end{align}
We can prove that 
the system of linear differential equations (\ref{GKZpqrIntro}) and (\ref{NewDiffEqIntro}) is integrable and of holonomic rank five (Theorem \ref{ThmPFS}).
Therefore, this system is the Picard-Fuchs system for the family of ${\rm Kum}(A)$. 
The singular loci of this system are displayed in Corollary \ref{CorSingular}.
Due to the forms of the loci, we can see that our Picard-Fuchs system is not coming from popular and  well-known hypergeometric systems, like Appell's system or Lauricella's system.
As far as the author can see,
the elliptic fibration corresponding to (\ref{KummerEqIntro}),
which is called Fibration 3 in \cite{Kumar},
 is the unique one which allows us to apply the techniques of GKZ systems and power series.

Here, we give a short comparison with  previous research related to ours.
Griffin and Malmendier  \cite{GM}  study periods of Kummer surfaces ${\rm Kum}(E_1 \times E_2)$ for products of two elliptic curves $E_1$ and $E_2$.
Their results are based on Jacobian elliptic fibrations on ${\rm Kum}(E_1 \times E_2)$.
They obtain several simple expressions of period integrals by using well-known Gauss or Appell hypergeometric functions.
The family of ${\rm Kum}(E_1 \times E_2)$ is a subfamily of our family of the surfaces (\ref{KummerEqIntro}) (see Remark \ref{RemGM}).
Moreover, Cao, Movasati and Yau \cite{CMY} study the Gauss-Manin connection for the family of the algebraic curves of genus two defined by $C: w^2=v^6 + t_2 v^4 + t_3 v^3 + t_4 v^2 + t_5 v + t_6 $.
They explicitly compute the Gauss-Manin connections for four meromorphic $1$-forms $\frac{dv}{w}, \frac{v dv}{w}, \frac{v^3 dv}{w}, \frac{t_2}{2} \frac{v^2 dv}{w} +\frac{v^4 dv}{w}$.
The family of Kummer surfaces ${\rm Kum}({\rm Jac}(C))$ for the Jacobian varieties ${\rm Jac}(C)$ coincides with the family of the surfaces (\ref{KummerEqIntro}).
Therefore, it would be an interesting problem to investigate the relation between our result and that of \cite{CMY}.
Furthermore, Doran, Harder, Movasati and Whitcher \cite{DHMW} study the Picard-Fuchs system for the family of the $K3$ surfaces
$$
y^2 z w -4 x^3 z + 3\alpha_0 x z w^2 +\beta_0 z w^3 + \gamma_0 x z^2 w -\frac{1}{2} (\delta_0 z^2 w^2 + w^4) =0
$$
introduced by \cite{CD}.
Here, $(\alpha_0:\beta_0 :\gamma_0:\delta_0) \in \mathbb{P}(4,6,10,12)$.
They compute the Picard-Fuchs system on each chart $\{\alpha_0\not =0\}, \{\beta_0\not =0\},\{\gamma_0\not =0\}$ and $\{\delta_0\not =0\}$ of the parameter space.
Since the explicit expression of their Picard-Fuchs system is too complicated,
 they give a description of the system only for the chart $\{\alpha_0\not =0\}$ (see \cite{DHMW} Section 5.3).
 The family \cite{CD} is the family of the Shioda-Inose partners of the Kummer surfaces ${\rm Kum}(A)$.
 Additionally, the  space of parameters $(p,q,r)$ of our family of Kummer surfaces (\ref{KummerEqIntro})  gives a covering of the  space of $(\alpha_0:\beta_0 :\gamma_0:\delta_0)$ (see (\ref{pqrT}) and Remark \ref{pqrRem}).
Our Picard-Fuchs system   (\ref{GKZpqrIntro}) and (\ref{NewDiffEqIntro})  may be related to that of \cite{DHMW}.
Our research and that of \cite{DHMW} are based on different motivations and methods.
Each of them has interesting characteristics, respectively. 
The author highlights the following features of our results:
\begin{itemize}
\item 
every Kummer surface ${\rm Kum}(A)$ is attained by (\ref{KummerEqIntro}) for  $(p,q,r)\in \mathbb{C}^3$,
which gives a system of coordinates of only one chart $\mathbb{C}^3$ (see Lemma \ref{LemKum});

\item
our Picard-Fuchs system has a simple expression in terms of $(p,q,r)$;

\item
we have an explicit holomorphic  solution with a power series expression of our Picard-Fuchs system (see Proposition \ref{PropSer}).
\end{itemize}
Our parameter space has a natural and simple compactification as in Lemma \ref{LemKum}.
The characteristics of our results are based on this fact.

The author anticipates that our  family of Kummer surfaces with the explicit Picard-Fuchs system will bring certain benefits to research in various areas.
Sato \cite{S} studies higher Chow cycles on Kummer surfaces ${\rm Kum}(E_1 \times E_2)$ precisely by using appropriate expressions of  periods and the Picard-Fuchs operators for ${\rm Kum}(E_1 \times E_2)$.
Since the family of ${\rm Kum}(E_1 \times E_2)$ can be regarded as a subfamily of our family of the Kummer surface (\ref{KummerEqIntro}),
it appears to the author  that our expressions of the period and the Picard-Fuchs system  for (\ref{KummerEqIntro}) will be useful for further research.
Moreover, a Kummer surface is a typical two-dimensional Calabi-Yau variety.
Since Picard-Fuchs systems for Calabi-Yau varieties are very important in mirror symmetry,
which suggests non-trivial relations between geometry and string theory,
the author expects that our family provides a handy model in such a research field.

\section{Period integrals on Kummer surfaces}

Let $S$ be an algebraic $K3$ surface.
Let $\omega$ be a unique non-zero holomorphic $2$-form on $S$ up to a constant factor.
The $2$-cohomology group $H^2(S,\mathbb{Z})$ is regarded as  the even unimodular lattice $II_{3,19}$ of signature $(3,19)$
via the topological cup product $H^2(S,\mathbb{Z}) \times H^2(S,\mathbb{Z}) \rightarrow \mathbb{Z}$. 
This lattice will be denoted by $L_{K3}$.
The N\'eron-Severi lattice ${\rm NS}(S)$ is  a sublattice of $H^2(S,\mathbb{Z})$ defined by $H^2(S,\mathbb{Z}) \cap H^{1,1}(S,\mathbb{R})$.
This is a non-degenerate lattice of signature $(1,\rho-1)$.
Here, $\rho={\rm rank}({\rm NS}(S))$ is called the Picard number of $S$.
The orthogonal complement ${\rm Tr}(S)$ of ${\rm NS}(S)$ in $H^2(S,\mathbb{Z})$ is called the transcendental lattice.
This is of signature $(2,20-\rho)$.
We can identify $H^2(S,\mathbb{Z})$ with the $2$-homology group $H_2(S,\mathbb{Z})$ by the Poincar\'e duality.
From now on, we often regard ${\rm NS}(S)$ and ${\rm Tr}(S)$ as sublattices of $H_2(S,\mathbb{Z})$.
Then, ${\rm NS}(S)$ in $H_2(S,\mathbb{Z})$ is equal to the kernel of the linear mapping $H_2(S,\mathbb{Z})\ni \gamma \mapsto \int_\gamma \omega\in \mathbb{C}$.

Let $M$ be an even non-degenerate lattice in $L_{K3}$ of signature $(1,\nu)$.
An $M$-polarized $K3$ surface is a pair $(S,j)$ of a $K3$ surface $S$ and a primitive embedding $j: M\hookrightarrow {\rm NS} (S)$.

Let $A=\mathbb{C}^2/\Lambda$ be a principally polarized Abelian surface,
where $\Lambda$ is a lattice of $\mathbb{C}^2$.
Let $(z_1,z_2)$ be the coordinates of $\mathbb{C}^2$.
The Abelian surface $A$ admits an involution $\iota$ derived from $(z_1,z_2) \mapsto (-z_1,-z_2)$ on $\mathbb{C}^2$.
The minimal resolution of the quotient surface $A/\langle \iota \rangle$ is called the Kummer surface ${\rm Kum}(A)$ for $A$. 
It is an algebraic $K3$ surface.

According to \cite{Mo}, the family of Kummer surfaces ${\rm Kum}(A)$ for principally polarized Abelian surfaces $A$ is a family of ${\bf M_{\rm Kum}}$-polarized $K3$ surfaces,
where ${\bf M_{\rm Kum}}$ is an even non-degenerate lattice of signature $(1,16)$
whose orthogonal complement  is explicitly given by
\begin{align}\label{TrKum}
{\bf A_{\rm Kum}} =U(2) \oplus U(2) \oplus A_1(-2)
\end{align}
of signature $(2,3)$.
Here, $U(2)$ ($A_1(-2)$) is the lattice of rank $2$ ($1$, resp.) whose intersection matrix is $\begin{pmatrix} 0 & 2 \\ 2 & 0 \end{pmatrix}$ ($(-4)$ , resp.).
We note that there is a unique primitive embedding $ {\bf M_{\rm Kum}} \hookrightarrow L_{K3}$ up to isometry
(see \cite{NiBilinear} Theorem 1.14.4 or \cite{Mo} Theorem 2.8).

\subsection{Explicit model of Kummer surfaces}

In this paper, we will study a family of complex elliptic surfaces
\begin{align}\label{KummerEq}
K(p,q,r): y^2 =x(x+t^2) (x+t^3 +p t^2 + q t +r),
\end{align}
where $p,q,r$ are complex parameters.
This equation defines an elliptic fibration $(x,y,t) \mapsto t$ with singular fibres of Kodaira type $I_4+6 I_2 + I_2^*$.
As we will see in this subsection, 
the equation (\ref{KummerEq}) gives an explicit model of the Kummer surface ${\rm Kum}(A)$ for a principally polarized Abelian surface $A$.

First, we note the origin of (\ref{KummerEq}).
For a $K3$ surface $S$ over $\mathbb{C}$,
a Jacobian elliptic fibration $S\rightarrow \mathbb{P}^1(\mathbb{C})$ is an elliptic fibration with  a section $\mathbb{P}^1(\mathbb{C}) \rightarrow S$.
Kumar \cite{Kumar} shows that there are 25 different Jacobian elliptic fibrations on the Kummer surface ${\rm Kum}(A)$ for a principally polarized Abelian surface $A$.
He obtains an explicit defining equation for each elliptic fibration.
In particular,
Kumar's Fibration 3 is given by the defining equation
\begin{align}\label{KumarNo3}
y_0^2 = &(x_0 + 4 (\lambda_1 - 1) \lambda_2 (\lambda_3 - \lambda_2) t_0 (t_0 + 4 (\lambda_2 - \lambda_1) (\lambda_3 - 1)))\notag\\
 &\times (x_0 -  4 \lambda_2 (\lambda_2 - \lambda_1) (\lambda_3 - 1) t_0 (t_0 - 4 (\lambda_1 - 1) (\lambda_3 - \lambda_2))) \notag \\
&\times (x_0 - (t_0 - 4 (\lambda_1 - 1) (\lambda_3 - \lambda_2))(t_0 + 4 (\lambda_2 - \lambda_1) (\lambda_3 - 1)) \notag  \\
&\quad \quad\quad\quad\quad \times (\lambda_1 \lambda_3 t_0 + 4 (\lambda_1 - 1) (\lambda_2 - \lambda_1) (\lambda_3 - 1) (\lambda_3 - \lambda_2))).
\end{align}
Here,
$\lambda_1,\lambda_2,\lambda_3$ are complex parameters
which are coming from a defining equation of a hyperelliptic curve $w^2 =v (v-1)(v-\lambda_1)(v-\lambda_2)(v-\lambda_3)$
of genus two.
If we perform a transformation 
$$
x_0 \mapsto x_0 -4 (\lambda_1 - 1) \lambda_2 (\lambda_3 - \lambda_2) t_0 (t_0 + 4 (\lambda_2 - \lambda_1) (\lambda_3 - 1))
$$
to (\ref{KumarNo3}),
we obtain an equation in the form
$$
y_0^2=x_0 \left(x_0+ b_2(\lambda) t_0^2 \right) \left(x_0+c_3(\lambda) t_0^3 + c_2(\lambda) t_0^2 + c_1(\lambda) t_0+ c_0(\lambda) \right),
$$
where $b_2(\lambda)$ and $c_j(\lambda)$ $(j\in\{0,1,2,3\})$ are polynomials in $\lambda_1,\lambda_2,\lambda_3$.
By a transformation $(x_0,y_0,z_0) \mapsto (\mu_x(\lambda)x_0, \mu_y (\lambda)y_0, \mu_z (\lambda) z_0)$,
where $\mu_x(\lambda), \mu_y(\lambda),\mu_z(\lambda)$ are appropriate functions in $\lambda_1, \lambda_2, \lambda_3$,
we obtain (\ref{KummerEq}).
We have a correspondence between $(\lambda_1,\lambda_2,\lambda_3)$ and $(p,q,r)$ given by
\begin{align}\label{pqrlambda}
\begin{cases}
 p=& -\frac{1}{d_\lambda} (\lambda_1 \lambda_2 - \lambda_1^2 \lambda_2 - \lambda_1 \lambda_3 + 2 \lambda_1^2 \lambda_3 - 3 \lambda_1 \lambda_2 \lambda_3 + 2 \lambda_1^2 \lambda_2 \lambda_3\\
     &\quad \quad  + \lambda_2^2 \lambda_3 - \lambda_1 \lambda_2^2 \lambda_3 + 2 \lambda_1 \lambda_3^2- 3 \lambda_1^2 \lambda_3^2 - \lambda_2 \lambda_3^2 + 2 \lambda_1 \lambda_2 \lambda_3^2),\\
 q=&\frac{1}{d_\lambda^2} ( \lambda_1 -1) \lambda_1 (\lambda_1 - \lambda_2) (\lambda_2 - \lambda_3) ( \lambda_3 -1) \lambda_3 (\lambda_1 - \lambda_2 + \lambda_1 \lambda_2 + \lambda_3 - 3 \lambda_1 \lambda_3 + \lambda_2 \lambda_3), \\
 r=& \frac{1}{d_\lambda^3} ( \lambda_1 -1)^2 \lambda_1^2 (\lambda_1 - \lambda_2)^2 (\lambda_2 - \lambda_3)^2 ( \lambda_3 -1 )^2 \lambda_3^2 ,
 \end{cases}
\end{align}
 where $d_\lambda= ( \lambda_2 -1 ) \lambda_2 (\lambda_1 - \lambda_3) $.

Let us consider (\ref{KummerEq}) more precisely.
We will obtain a natural compactification of the parameter space ${\rm Spec}(\mathbb{C}[p,q,r])$.
Let
\begin{align}\label{KummerM}
\overline{K}(p,q,r,b) : y^2 = x(x+b t^2 w^2) (x+t^3 + p t^3 w^2+ q t w^4 +r w^6)
\end{align}
be a hypersurface in the weighted projective space $\mathbb{P}(6,9,2,1) = {\rm Proj} (\mathbb{C}[x,y,t,w])$,
where $(p,q,r,b)\in \mathbb{C}^4-\{0\}$ are complex parameters.
We have an action of the multiplicative group $\mathbb{C}^*$ on $\mathbb{P}(6,9,2,1)$ ($\mathbb{C}^4-\{0\}$, resp.)
given by $(x,y,t,w) \mapsto (x,y,t,\lambda^{-1} w)$ ($(p,q,r,b)\mapsto (\lambda^2 p, \lambda^4 q, \lambda^6 r, \lambda^2 b)$, resp.) for $\lambda \in \mathbb{C}^*$.
The surface $\overline{K}(p,q,r,b)$ is invariant under this action.
Thus, we naturally  have a family
$\left\{\overline{K}(p,q,r,b) \mid (p:q:r:b) \in \mathbb{P}(2,4,6,2) \right\} \rightarrow  \mathbb{P}(2,4,6,2) $
of algebraic surfaces.
The parameter $(p,q,r)$ of (\ref{KummerEq}) is regarded as the point $(p:q:r:1) \in \mathbb{P}(2,4,6,2)$.
 Setting
$\mathcal{T}=\mathbb{P}(2,4,6,2) -\{b=0\}$,
we have a family
\begin{align}\label{KummerFamily}
\varpi: \left\{\overline{K}(p,q,r,b) \mid (p:q:r:b) \in \mathcal{T} \right\} \rightarrow  \mathcal{T}.
\end{align}

\begin{lem}\label{LemKum}
Every Kummer surface ${\rm Kum}(A)$ for a principally polarized Abelian surface $A$ is given by a member of the family $\varpi$ of (\ref{KummerFamily}).
\end{lem}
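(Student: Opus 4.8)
The plan is to use the classification of principally polarized Abelian surfaces of dimension two: such an $A$ is either the Jacobian ${\rm Jac}(C)$ of a smooth curve $C$ of genus two with its canonical polarization, or a product $E_1 \times E_2$ of elliptic curves with the product polarization, and the product locus lies in the closure of the Jacobian locus in the moduli space $\mathcal{A}_2$ of such surfaces. I would settle the generic Jacobian case directly by means of the explicit transformations recalled before the statement, and then reduce the product case to it by a degeneration argument.

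For $A = {\rm Jac}(C)$, move the six Weierstrass points of $C$ to $0, 1, \infty, \lambda_1, \lambda_2, \lambda_3$ by an automorphism of $\mathbb{P}^1(\mathbb{C})$, so that $C : w^2 = v(v-1)(v-\lambda_1)(v-\lambda_2)(v-\lambda_3)$ with $\lambda_1, \lambda_2, \lambda_3$ pairwise distinct and different from $0$ and $1$; in particular $d_\lambda = (\lambda_2 - 1)\lambda_2(\lambda_1 - \lambda_3) \neq 0$ and $\lambda_1 \lambda_3 \neq 0$. By Kumar's description of Fibration~3, ${\rm Kum}(A)$ is the $K3$ surface defined by the Weierstrass equation (\ref{KumarNo3}). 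Carrying out the translation in $x_0$ and then the weighted rescaling of $(x_0, y_0, t_0)$ exactly as recorded before the statement --- both well defined because $d_\lambda \neq 0$ and $\lambda_1 \lambda_3 \neq 0$, and both inducing isomorphisms of $K3$ surfaces, since a birational map between $K3$ surfaces is an isomorphism --- transforms (\ref{KumarNo3}) into (\ref{KummerEq}) with $(p, q, r)$ given by (\ref{pqrlambda}). Thus ${\rm Kum}(A) \cong K(p, q, r)$, the member of $\varpi$ over $(p : q : r : 1) \in \mathcal{T}$. Everything in this paragraph is a finite verification, essentially bookkeeping, once (\ref{KumarNo3}) and (\ref{pqrlambda}) are in hand.

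For $A = E_1 \times E_2$, choose a holomorphic arc $s \mapsto C_s$ of smooth genus-two curves degenerating, as $s \to 0$, to a stable curve of compact type whose two genus-one components are $E_1$ and $E_2$ --- in Rosenhain coordinates this is realized by letting three of the six Weierstrass points collide --- so that ${\rm Jac}(C_s)$ tends to $E_1 \times E_2$ as a principally polarized Abelian surface and ${\rm Kum}({\rm Jac}(C_s))$ tends to ${\rm Kum}(E_1 \times E_2)$. By the previous paragraph, ${\rm Kum}({\rm Jac}(C_s)) \cong K(p(s), q(s), r(s))$ with $(p(s), q(s), r(s))$ given by (\ref{pqrlambda}); inspecting (\ref{pqrlambda}) one checks that along such a collision the numerators of $p, q, r$ vanish to order at least that of the corresponding power of $d_\lambda$, so that $(p(s), q(s), r(s))$ stays bounded in $\mathbb{C}^3$ and converges to some $(p_0, q_0, r_0)$. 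By continuity of the family $\varpi$ and of the Kummer construction, $K(p_0, q_0, r_0) \cong {\rm Kum}(E_1 \times E_2)$; hence this surface too is a member of $\varpi$. (Alternatively, this last case follows from the explicit models of ${\rm Kum}(E_1 \times E_2)$ inside our family; cf.\ Remark~\ref{RemGM}.)

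The main obstacle is the product case, specifically the need to guarantee that the degeneration of the Kummer surface stays within the algebraic family $\varpi$, i.e.\ that the parameters produced by (\ref{pqrlambda}) do not escape to the boundary $\{b = 0\}$ of $\mathbb{P}(2,4,6,2)$. The underlying reason is that the compact-type degenerations of ${\rm Jac}(C)$ --- the only ones that yield a principally polarized Abelian surface in the limit --- correspond to interior points of $\mathcal{T}$, whereas $\{b = 0\}$ is attained only by the non-compact, semi-abelian, degenerations. Making this precise comes down to a careful analysis of the vanishing orders in (\ref{pqrlambda}) along the various loci where three of the six Weierstrass points collide: a finite but slightly delicate computation. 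The Jacobian case, by contrast, is routine once Kumar's equation (\ref{KumarNo3}) and the substitution (\ref{pqrlambda}) are available.
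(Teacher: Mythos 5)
Your route is genuinely different from the paper's. The paper does not split into Jacobian and product cases at all: it quotes from \cite{NS} the normal form (\ref{KummerCanonical}), whose parameter space $\mathbb{P}(4,6,10,12)-\{{\bf t}_{10}={\bf t}_{12}=0\}$ is already known to parametrize \emph{all} Kummer surfaces of principally polarized Abelian surfaces (products included, on the locus $\{{\bf t}_{10}=0\}$), exhibits an explicit birational transformation carrying (\ref{KummerM1}) to (\ref{KummerCanonical}), reads off the induced map $g:\mathbb{P}(2,4,6,2)\to\mathbb{P}(4,6,10,12)$ from (\ref{pqrT}), and checks that $g^{-1}(\cdot)\cap\mathcal{T}$ is nonempty (generically $20$ points) over every point of the target. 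Surjectivity of $g$ gives the lemma in one stroke, with no case distinction. Your first paragraph (Kumar's Fibration 3 together with (\ref{pqrlambda})) is fine for Jacobians and is essentially the material the paper records before the lemma as the \emph{origin} of equation (\ref{KummerEq}); note, though, that that passage is itself only a sketch (the scaling functions $\mu_x,\mu_y,\mu_z$ are never written down), so you are leaning on an identification that is not actually proved anywhere.

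The genuine gap is the product case. First, the convergence of $(p(s),q(s),r(s))$ is asserted rather than proved: in the Rosenhain normalization a compact-type degeneration forces $d_\lambda=(\lambda_2-1)\lambda_2(\lambda_1-\lambda_3)\to 0$, so the cancellations in (\ref{pqrlambda}) must be verified explicitly (they do occur for a generic collision, with $r\to 0$, consistent with Remark \ref{RemGM}, but that is a computation you have not done, and the limit point depends on the direction of approach). Second, and more seriously, ``by continuity of the family $\varpi$ and of the Kummer construction'' does not by itself yield $K(p_0,q_0,r_0)\cong{\rm Kum}(E_1\times E_2)$: a limit of isomorphisms need not be an isomorphism (the coordinate changes relating (\ref{KumarNo3}) to (\ref{KummerEq}) involve the $\lambda_i$ and may themselves degenerate), and identifying the two limits requires either separatedness of the relevant lattice-polarized moduli via a Torelli argument, or a direct check that the limiting fibre of $\varpi$ is a smooth $K3$ with the correct periods. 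One must also check that every pair $(E_1,E_2)$ is reached by an admissible degeneration. Finally, your proposed shortcut via Remark \ref{RemGM} is circular: that remark is justified in the paper by (\ref{pqrT}), which is established precisely in the proof of Lemma \ref{LemKum}. The paper's argument avoids all of these issues because the normal form of \cite{NS} already covers the product locus.
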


\begin{proof}
According to \cite{NS} Section 6.1 (see also \cite{NS} (2.3)),
the family of Kummer surfaces ${\rm Kum}(A)$ for a principally polarized Abelian surface $A$ is explicitly given by the family of elliptic surfaces with the Weierstrass equation
\begin{align}\label{KummerCanonical}
y'^2 = x'^3 +({\bf t}_4 s^4 +{\bf t}_{10} s^2) x' +(s^8 + {\bf t}_6 s^6 +{\bf t}_{12} s^4).
\end{align}
Here, the tuple of parameters ${\bf t}_j$ $(j \in \{4,6,10,12\})$ 
corresponds to  a point $({\bf t}_4:{\bf t}_6:{\bf t}_{10}:{\bf t}_{12}) \in \mathbb{P}(4,6,10,12)-\{{\bf t}_{10}={\bf t}_{12}=0\}$.
We remark that the fibration (\ref{KummerCanonical})  coincides with the Fibration 13 of \cite{Kumar} under an appropriate transformation of parameters.
On the other hand, let us consider the elliptic surface
\begin{align}\label{KummerM1}
y^2 = x(x+b t^2 ) (x+t^3 + p t^2 + q t  +r ).
\end{align}
This is derived from (\ref{KummerM}), if we put $w=1$. 
In this proof,
we will give a birational transformation from (\ref{KummerM1})  to (\ref{KummerCanonical}) explicitly
and show that every surface of (\ref{KummerCanonical}) is attained by the surface (\ref{KummerM1}).

Perform a birational transformation 
\begin{align*}
x=\frac{1}{2} (- s_0^2 - b t^2 - y_0),
\quad
y=-\frac{1}{2} s_0 (-2 r +  s_0^2 - 2 q t + b t^2 - 2 p t^2 - 2 t^3 + y_0)
\end{align*}
to (\ref{KummerM1}).
Then, we have an elliptic surface
\begin{align}\label{EllQuad}
y_0^2= b^2 t^4- 4 s_0^2 t^3 + 2 b s_0^2 t^2 - 4 p s_0^2 t^2 - 4 q s_0^2 t -4 r s_0^2 + s_0^4,
\end{align}
whose right hand side is a polynomial of degree $4$ in $t$.
We can transform (\ref{EllQuad}) into the Weierstrass equation
\begin{align}\label{Weierstrass0}
y_0'^2= &x_0'^3+ \left(b^2 r s_0^2 - \frac{b^2 s_0^4}{3} + \frac{b p s_0^4}{3}  - \frac{p^2 s_0^4}{3} +  q s_0^4 \right) x \notag\\
&+\left(\frac{b^2 q^2 s_0^4}{4} + \frac{b^3 r s_0^4}{3} - \frac{2b^2 p r s_0^4 }{3}  - \frac{ 2 b^3 s_0^6}{27} + \frac{b1^2 p s_0^6}{9}  + \frac{b p^2 s_0^6}{9}  - \frac{ 2 p^3 s_0^6}{27} - \frac{b q s_0^6}{6}  + \frac{p q s_0^6}{3}  - r s_0^6 + \frac{s_0^8}{4}\right)
\end{align}
by applying a technique appeared in \cite{AKMMMP} Section 3.1.
By putting $s_0=2s$, $x_0'=4 x'$, $y_0'=8 y'$  to (\ref{Weierstrass0}), 
we obtain (\ref{KummerM1}) whose coefficients ${\bf t}_j$ $(j \in\{4,6,10,12\} )$ are explicitly given by weighted homogeneous polynomials in $p,q,r,b$ as follows:
\begin{align}\label{pqrT}
\begin{cases}
&\displaystyle {\bf t}_4(p,q,r,b)=\left(-\frac{b^2}{3} + \frac{b p}{3} - \frac{p^2}{3} + q\right), \quad {\bf t}_6(p,q,r,b)=-\frac{1}{54} (b - 2 p) (4 b^2 + 2 b p - 2 p^2 + 9 q) -r,  \vspace{2mm} \\
&\displaystyle {\bf t}_{10}(p,q,r,b)=\frac{1}{4} b^2 r,  \quad {\bf t}_{12}(p,q,r,b)=\frac{1}{48} b^2 (3 q^2 + 4 b r - 8 p r). \\
\end{cases}
\end{align}
We have the mapping $g: \mathbb{P}(2,4,6,2) \rightarrow \mathbb{P}(4,6,10,12)$ given by
$(p:q:r:t) \mapsto ({\bf t}_4(p,q,r,b):  {\bf t}_6(p,q,r,b): {\bf t}_{10}(p,q,r,b) :{\bf t}_{12}(p,q,r,b)) $.
For every point $({\bf t}_4:  {\bf t}_6: {\bf t}_{10} :{\bf t}_{12}) \in \mathbb{P}(4,6,10,12)-\{{\bf t}_{10}={\bf t}_{12}=0\}$,
we can check that $g^{-1} ({\bf t}_4:  {\bf t}_6: {\bf t}_{10} :{\bf t}_{12}) \cap \mathcal{T} \not = \phi$
In fact, for a generic point $({\bf t}_4:  {\bf t}_6: {\bf t}_{10} :{\bf t}_{12}) \in \mathbb{P}(4,6,10,12)-\{{\bf t}_{10}={\bf t}_{12}=0\}$,
$g^{-1} ({\bf t}_4:  {\bf t}_6: {\bf t}_{10} :{\bf t}_{12}) \cap \mathcal{T}$ consists of $20$ points.
It shows that every Kummer surface ${\rm Kum}(A)$ can be attained by the surface (\ref{KummerM1}).
\end{proof}

The above proof is based on a technique appeared in \cite{Kumar},
by which Kumar transfers an elliptic fibration to another one.
The correspondence (\ref{pqrT}) of parameters shows a feature of our  compactification of the parameter space.
Also,
there does not appear  precise descriptions of the birational transformations  in \cite{Kumar}.
Therefore,
the author  gives an explicit proof of the above lemma here.

\begin{rem}\label{pqrRem}
We have an explicit correspondence
$
{\bf t}_4=-3\alpha_0, {\bf t}_6=-2\beta_0,  {\bf t}_{10}=-\gamma_0$ and ${\bf t}_{12}=\delta_0$
between the parameters $({\bf t}_4:{\bf t}_6:{\bf t}_{10}:{\bf t}_{12})$ and $(\alpha_0:\beta_0:\gamma_0:\delta_0)$ ,
which are the parameters of the Clingher-Doran family due to \cite{CD} appeared in Introduction.
We note that the parameters ${\bf t}_4,  {\bf t}_6, {\bf t}_{10},{\bf t}_{12}$ in (\ref{KummerCanonical}) have an expression in terms of Siegel modular forms of degree two via the period mapping for the family of Kummer surfaces.
Furthermore, 
$(\lambda_1,\lambda_2,\lambda_3)$ in (\ref{pqrlambda}) has an explicit expression in terms of  the Riemann theta constants.
The parameters ${\bf t}_4,  {\bf t}_6, {\bf t}_{10}$ and ${\bf t}_{12}$ are described by  $\lambda_1,\lambda_2$ and $\lambda_3$  via the Igusa-Clebsch invariants.
For detail, one can refer to \cite{NS1}.
According to (\ref{pqrlambda}) and (\ref{pqrT}),
the tuple of our parameters $(p,q,r)$ are regarded as interpolations between $({\bf t}_4:  {\bf t}_6: {\bf t}_{10} : {\bf t}_{12})$ and  $(\lambda_1,\lambda_2,\lambda_3)$. 
\end{rem}

\subsection{Period mapping for our family of Kummer surfaces}

Set $\mathcal{W} = {\rm Spec}(\mathbb{C}[p,q,r]) - \{q=r=0\}$.
According to Lemma \ref{LemKum}, together with (\ref{pqrT}),   we have a family
\begin{align}\label{KummerFamily1}
\pi: \mathcal{G}=\{K(p,q,r) \mid (p,q,r) \in \mathcal{W} \} \rightarrow  \mathcal{W}
\end{align}
of Kummer surfaces.

Take a generic point $(p_0,q_0,r_0)\in \mathcal{W}$ such that
$K_0= K(p_0,q_0,r_0) $ satisfies ${\rm NS}(K_0)={\bf M}_{\rm Kum}$ and ${\rm Tr}(K_0)={\bf A}_{\rm Kum}$.
We identify $H_2(K_0,\mathbb{Z})$ with the $K3$ lattice $L_{K3}$.
Let $\left\{ \gamma_6,\ldots, \gamma_{22} \right\}$ be a basis of ${\rm NS}(K_0)$.
Since ${\bf M_{\rm Kum}}$ is a primitive sublattice in $L_{K3}$,
there are $\gamma_1,\ldots,\gamma_5 \in H_2(K_0,\mathbb{Z}) $ such that $\{\gamma_1,\ldots,\gamma_{22}\}$ is a basis of $H_2(K_0,\mathbb{Z}) \simeq L_{K3}$.
Let $\{\delta_1,\ldots,\delta_{22}\}$ be its dual basis with respect to the unimodular lattice $H_2(K_0,\mathbb{Z})$. 
Then, the intersection matrix of the  sublattice $\langle \delta_1,\ldots,\delta_5 \rangle_\mathbb{Z}$ is given by ${\bf A_{\rm Kum}}$ of (\ref{TrKum}).
Let $\mathcal{U}$ be a sufficiently small neighborhood of $(p_0,q_0,r_0)$ in $\mathcal{W}$.
There exists a topological trivialization $\tau:\{K(p,q,r)\mid (p,q,r) \in \mathcal{U}\} \rightarrow K_0\times \mathcal{U}$. 
Letting $\beta : K_0 \times \mathcal{U} \rightarrow K_0$ be a projection, we put $r = \beta \circ \tau.$
Then, $r'_{(p,q,r)} = r |_{K(p,q,r)}$ gives a $\mathcal{C}^\infty$ isomorphism of complex surfaces.
Hence, we have an isometry $\psi_{(p,q,r)}: H_2(K(p,q,r),\mathbb{Z}) \rightarrow H_2(K_0,\mathbb{Z})\simeq L_{K3}$.
Let 
\begin{align}\label{omega}
\omega_{(p,q,r)}=\frac{dx \wedge dt}{\sqrt{x (x+t^2) (x+t^3 +p t^2 + q t +r)}}
\end{align}
be a unique holomorphic $2$-form on $K(p,q,r)$ up to a constant factor.
If $\Gamma \in {\rm NS}(K_0)$, then $\psi_{(p,q,r)}^{-1}(\Gamma) \in {\rm NS}(K(p,q,r)).$ 
Therefore, we have five non-trivial period integrals for $K(p,q,r)$ on $\psi_{(p,q,r)}^{-1} (\gamma_j)$ $(j\in \{1\ldots,5\})$.
Thus, we have a local period mapping
\begin{align}\label{PeriodPhi}
\mathcal{U} \ni (p,q,r) \mapsto \Phi_\mathcal{U} (p,q,r)=\left( \int_{\psi_{(p,q,r)}^{-1} (\gamma_1)} \omega_{(p,q,r)} : \cdots :   \int_{\psi_{(p,q,r)}^{-1} (\gamma_5)} \omega_{(p,q,r)}\right)
\end{align}
on $\mathcal{U}$ for our family $\pi$ of (\ref{KummerFamily1}).
By virtue of the Riemann-Hodge relation,
$ \Phi_\mathcal{U} (p,q,r)$
is a point of 
$\mathcal{D}_{\rm Kum} = \left\{ \xi\in\mathbb{P}^4 (\mathbb{C}) \mid \xi {\bf A_{\rm Kum}} {}^t \xi =0,  \xi {\bf A_{\rm Kum}} \overline{{}^t \xi} >0 \right\}$.
We have two connected components of $\mathcal{D}_{\rm Kum}$.
Such a connected component is a three-dimensional bounded symmetric domain of type $IV$,
which is biholomorphic to the Siegel upper half plane of degree two.

Let the notation be as above.
Let us consider the locally constant sheaf $R_2 \pi_* \mathbb{Z}$.
 For $(p,q,r) \in \mathcal{U}$,
$\{\pi^{-1}_{(p,q,r)} (\gamma_1),\ldots,\pi^{-1}_{(p,q,r)} (\gamma_{22})  \}$ gives a basis of 
$R_2 \pi_* \mathbb{Z} |_\mathcal{U} = \bigcup_{(p,q,r) \in \mathcal{U}} H_2 (K(p,q,r), \mathbb{Z})$.
Let $\omega_{\mathcal{G}/\mathcal{W}} $ be the sheaf of relative holomorphic $2$-forms for $\pi$.
The holomorphic $2$-form $\omega$ of (\ref{omega}) defines a unique non-trivial section of $H^0 (\mathcal{U}, \pi_* \omega_{\mathcal{G}/\mathcal{W}}) = H^0 (\pi^{-1} (\mathcal{U}), \omega_{\mathcal{G}/\mathcal{W}})$ up to a constant factor.
The right hand side of (\ref{PeriodPhi}) gives the expression of $\omega \in H^0 (\mathcal{U}, \pi_* \omega_{\mathcal{G}/\mathcal{W}})$.
The Gauss-Manin connection $\nabla$ for the family $\pi$, 
which satisfies $\nabla \omega =0$,
 derives a system of linear differential equations in independent variables $p,q,r$ of rank five,
 such that its space of solutions is generated by the integrals appeared in (\ref{PeriodPhi}).
 We will call this system the Picard-Fuchs system for the family $\pi$ of Kummer surfaces.

\subsection{Power series expansion of period integral for Kummer  surfaces}

Let $(\alpha,N)$ be the Pochhammer symbol for $\alpha\in \mathbb{C}$.
The Gauss hypergeometric series is defined as
\begin{align}\label{GHGF}
{}_2 F_1\left(\alpha,\beta,\gamma ; \lambda \right) =\sum_{N=0}^\infty \frac{(\alpha,N)(\beta,N)}{(\gamma,N) N!} \lambda^N,
\end{align}
where $\alpha,\beta\in \mathbb{C}$ and $\gamma\in \mathbb{C}-\mathbb{Z}_{<0}$.
The radius of convergence of the right hand side of (\ref{GHGF}) is $1$.
The theory of elliptic integrals shows that
the Gauss hypergeometric series 
${}_2 F_1\left(\frac{1}{2},\frac{1}{2},1 ; \lambda \right) $
 for $\alpha=\beta=\frac{1}{2}$ and $\gamma=1$
has an expression
\begin{align}\label{EllipticHG}
{}_2 F_1\left(\frac{1}{2},\frac{1}{2},1 ; \lambda\right)  = {\rm const} \int_{\Delta_{v}} \frac{dv}{\sqrt{v (v-1) (v-\lambda)}},
\end{align}
for an appropriate $1$-cycle $\Delta_v$ on the elliptic curve (\ref{EllipticCurve}). 

\begin{prop}\label{PropSer}
Take an appropriate $2$-cycle $\Delta$ on the Kummer surface (\ref{KummerEq}).
Then, a period integral on $\Delta$ has a power series expansion in $p,q,r$ as
\begin{align}\label{KummerPerSer}
&\iint_\Delta \frac{dx \wedge dt}{ \sqrt{x(x+t^2) (x+t^3+p t^2 +q t +r)}} \notag \\
&={\rm const}\sum_{\ell,m,n=0}^\infty \frac{1}{2^{4 (\ell + 2 m + 3 n)}} \frac{(2 (\ell + 2 m + 3 n) )!)^2}{((\ell + 2 m + 3 n)!)^3} \frac{1}{\ell! m! n! (m + 2 n)!} p^\ell q^m r^n.
\end{align}
This expression is valid for $(p,q,r) \in U$,
where $U$ is a sufficiently small neighborhood of the origin $(p,q,r)=(0,0,0)$.  
\end{prop}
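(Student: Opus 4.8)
The plan is to realise $\Delta$ as a product torus, turn the double integral into an iterated one, evaluate the inner (fibrewise) integral as a Gauss hypergeometric function, and then read off the triple power series by a residue computation in $t$. Write $g(t)=t^3+pt^2+qt+r$, so that the three branch points of the fibre $y^2=x(x+t^2)(x+g(t))$ over a point $t$ are $x=0$, $x=-t^2$ and $x=-g(t)$. On the circle $|t|=\rho$ one has $|g(t)|\le\rho^3+|p|\rho^2+|q|\rho+|r|$, which is smaller than $\rho^2=|t^2|$ once $\rho\in(0,1)$ and $(p,q,r)$ lies in a small enough polydisc $U$ about $(0,0,0)$; fixing also a radius $\varepsilon$ with $\max_{|t|=\rho}|g(t)|<\varepsilon<\rho^2$ for all $(p,q,r)\in U$, we get that on $\{|x|=\varepsilon\}\times\{|t|=\rho\}$ the circle $|x|=\varepsilon$ encloses exactly the two branch points $0$ and $-g(t)$ and leaves $-t^2$ outside. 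Shrinking $U$ further if necessary so that the fibre is smooth over the whole circle $|t|=\rho$, the form $\frac{dx\wedge dt}{\sqrt{x(x+t^2)(x+g(t))}}$ is holomorphic near this torus, and $\Delta:=\{|x|=\varepsilon\}\times\{|t|=\rho\}$, suitably oriented, is a closed $2$-torus on (\ref{KummerEq}).

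Next I would evaluate the inner integral. For fixed $t$ with $|t|=\rho$, the integral $\oint_{|x|=\varepsilon}\frac{dx}{\sqrt{x(x+t^2)(x+g(t))}}$ is the period of the elliptic fibre along the $1$-cycle encircling $0$ and $-g(t)$. The substitution $x=-g(t)\,s$ puts the fibre into Legendre form with modulus $g(t)/t^2$, so by Euler's integral representation of ${}_2F_1\!\left(\frac12,\frac12,1;\,\cdot\,\right)$ used in (\ref{EllipticHG}), this period equals $\frac{\mathrm{const}}{t}\,{}_2F_1\!\left(\frac12,\frac12,1;\frac{g(t)}{t^2}\right)$, with a constant independent of $t,p,q,r$. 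Since $\frac{g(t)}{t^2}=t+p+q\,t^{-1}+r\,t^{-2}$, the inequality $\max_{|t|=\rho}|g(t)|<\varepsilon<\rho^2$ gives $\bigl|g(t)/t^2\bigr|<\varepsilon\rho^{-2}<1$ on $|t|=\rho$, so the hypergeometric series (\ref{GHGF}) converges uniformly there.

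Then I would insert ${}_2F_1\!\left(\frac12,\frac12,1;\lambda\right)=\sum_{N\ge0}\kappa_N\lambda^N$ with $\kappa_N=\frac{((2N)!)^2}{2^{4N}(N!)^4}$, expand $\bigl(t+p+q\,t^{-1}+r\,t^{-2}\bigr)^N$ by the multinomial theorem, and integrate term by term over $\{|t|=\rho\}$, using that $\oint_{|t|=\rho}t^{k}\,\frac{dt}{t}$ vanishes unless $k=0$. A multinomial term $\binom{N}{a,b,c,d}\,t^{a}p^{b}(q\,t^{-1})^{c}(r\,t^{-2})^{d}$ then survives only when $a=c+2d$; writing $\ell=b$, $m=c$, $n=d$, this forces $N=\ell+2m+3n$ and $a=m+2n$, and the surviving multinomial coefficient equals $\frac{(\ell+2m+3n)!}{(m+2n)!\,\ell!\,m!\,n!}$. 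Collecting the terms yields
\[
\iint_\Delta\frac{dx\wedge dt}{\sqrt{x(x+t^2)(x+g(t))}}=\mathrm{const}\sum_{\ell,m,n\ge0}\kappa_{\,\ell+2m+3n}\,\frac{(\ell+2m+3n)!}{(m+2n)!\,\ell!\,m!\,n!}\,p^\ell q^m r^n ,
\]
which, on writing out $\kappa_{\ell+2m+3n}$, is precisely (\ref{KummerPerSer}). The interchange of $\sum$ and $\oint$ is legitimate because on $|t|=\rho$ the double series is dominated by $\sum_N\kappa_N(\varepsilon\rho^{-2})^N$ with $\varepsilon\rho^{-2}<1$ and $\kappa_N$ bounded; the same estimate with $\binom{N}{a,b,c,d}\le4^N$ shows that the triple series converges on $U$, and its value at $(p,q,r)=(0,0,0)$ is the nonzero number $\mathrm{const}\cdot\kappa_0$.

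The last two steps are a mechanical multinomial-and-residue computation. The point that genuinely requires care is the first: one must verify that the radii $\rho,\varepsilon$ and the polydisc $U$ can be chosen simultaneously so that, for every $t$ with $|t|=\rho$, the circle $|x|=\varepsilon$ separates the branch point $-t^2$ from the pair $\{0,-g(t)\}$, the governing inequality being $|g(t)|<\varepsilon<|t|^2$; this is exactly what makes $\Delta$ a genuine $2$-cycle disjoint from the branch locus and, at the same time, exhibits the inner period as the hypergeometric function with modulus of absolute value $<1$. Granting that, the rest is routine, and the nonvanishing constant term above identifies $\iint_\Delta\frac{dx\wedge dt}{\sqrt{x(x+t^2)(x+g(t))}}$ with the distinguished holomorphic solution of the Picard-Fuchs system at the origin.
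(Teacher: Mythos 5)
Your proposal is correct and follows essentially the same route as the paper: realize $\Delta$ as a product cycle, evaluate the fibrewise $x$-integral as $\tfrac{{\rm const}}{t}\,{}_2F_1\bigl(\tfrac12,\tfrac12,1;t+p+\tfrac{q}{t}+\tfrac{r}{t^2}\bigr)$ via the Euler representation, expand by the multinomial theorem, and extract the coefficient of $t^0$ by the residue theorem, which forces $N=\ell+2m+3n$ and yields exactly the stated coefficients. Your only departure is cosmetic (a fibrewise Legendre substitution $x=-g(t)s$ in place of the paper's global change $x\mapsto -t^2x$), and you are in fact more explicit than the paper about choosing $\rho,\varepsilon,U$ so that $|g(t)|<\varepsilon<|t|^2$ separates the branch points and keeps the hypergeometric argument inside the unit disc.
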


\begin{proof}
Suppose that $(p,q,r)$ is a point of an open set $U$ in $(p,q,r)$-space.
Let $\Delta$ be a $2$-cycle on the surface (\ref{KummerEq}).
We have 
\begin{align}\label{OnDelta}
\iint_\Delta \frac{dx \wedge dt}{\sqrt{x(x+t^2)(x+t^3 +p t^2 +q t +r)}} 
={\rm const} \iint_{\Delta_0} \frac{dx \wedge dt}{t^2 \sqrt{x(x-1)\left(x-\left(t+p+\frac{q}{t} +\frac{r}{t^2}\right)\right)}} 
\end{align}
under the transformation $x\mapsto -t^2 x$. 
Here, $\Delta_0$ is the corresponding integral contour to $\Delta$ under the transformation.
In this proof, we will take $\Delta_0$ appropriately and show that the right hand side of (\ref{OnDelta}) has the expression (\ref{KummerPerSer}).
 
We take  $\Delta_0$ as a direct product of $\Delta_x$ and $\Delta_t$,
where $\Delta_x$ ($\Delta_t$, resp.) is a closed arc in $x$-plane ($t$-plane, resp.).
Here, we suppose $\Delta_t$ is an arc given by
$\left\{r e^{\sqrt{-1} \theta} \mid 0\leq \theta <2 \pi \right\}$,
where $r$ satisfies $\varepsilon<r<1$ for a sufficiently small positive number $\varepsilon>0$. 
 By (\ref{EllipticHG}),
 we suppose $\Delta_x$ is an arc such that
\begin{align}\label{OnDeltax}
\int_{\Delta_x} \frac{dx}{\sqrt{x(x-1)\left(x-\left(t+p+\frac{q}{t} +\frac{r}{t^2}\right)\right)}} = {\rm const} \cdot {}_2 F_1 \left(\frac{1}{2},\frac{1}{2},1 ; t+p+\frac{q}{t}+\frac{r}{t^2} \right)
\end{align}
holds for  $t\in \Delta_t$.
We obtain
\begin{align}\label{PResidue}
\iint_\Delta \frac{dx \wedge dt}{\sqrt{x(x+t^2)(x+t^3 +p t^2 +q t +r)}} 
={\rm const} \int_{\Delta_t} \frac{1}{t} \cdot {}_2F_1 \left(\frac{1}{2},\frac{1}{2},1 ; t+p+\frac{q}{t}+\frac{r}{t^2} \right) dt
\end{align}
by (\ref{OnDelta}) and (\ref{OnDeltax}).
Let us recall (\ref{GHGF}).  
We can assume that the neighborhood $U$ is  small enough comparing with every point $t\in \Delta_t$.
Then, we have the power series expansion
\begin{align}\label{SGHGF}
{}_2F_1 \left(\frac{1}{2},\frac{1}{2},1 ; t+p+\frac{q}{t}+\frac{r}{t^2} \right) 
=\sum_{N=0}^\infty \frac{(\frac{1}{2},N)^2}{(N!)^2} \left( t+p+\frac{q}{t}+\frac{r}{t^2} \right)^N,
\end{align}
which converges absolutely uniformly.
Remark that
$\displaystyle \left(\frac{1}{2},N\right) =\frac{1}{2^{2N}} \frac{(2N)!}{n!}$
holds.
Also, we have
\begin{align}\label{Niko}
&\left( t+p+\frac{q}{t} +\frac{r}{t^2} \right)^N
=\sum_{n=0}^N \begin{pmatrix} N  \\ N-n \end{pmatrix} \left( t+p+\frac{q}{t} \right)^{N-n} \left(\frac{r}{t^2}\right)^n \notag\\
&=\sum_{n=0}^N \sum_{m=0}^{N-n} \begin{pmatrix} N  \\ N-n \end{pmatrix} \begin{pmatrix} N-n  \\ N-n-m \end{pmatrix} \left( t+p \right)^{N-m-n}  \left(\frac{q}{t} \right)^m  \left(\frac{r}{t^2}\right)^n \notag\\
&=\sum_{n=0}^N \sum_{m=0}^{N-n} \sum_{\ell=0}^{N-n-m} \begin{pmatrix} N  \\ N-n \end{pmatrix} \begin{pmatrix} N-n  \\ N-n-m \end{pmatrix} \begin{pmatrix} N-n-m  \\ N-n-m-\ell \end{pmatrix} t^{N-n-m-\ell} p^\ell  \left(\frac{q}{t} \right)^m  \left(\frac{r}{t^2}\right)^n \notag\\
&=\sum_{n=0}^N \sum_{m=0}^{N-n} \sum_{\ell=0}^{N-m-n}  \frac{N!}{n! m! \ell! (N-n-m-\ell)!} t^{N-3n-2m-\ell} p^\ell q^m  r^n.
\end{align}
The integral  of (\ref{PResidue})  is calculated by applying the residue theorem and (\ref{Niko}).

Summarizing the above argument, we see that  the left hand side of (\ref{KummerPerSer})  has the expression  
\begin{align*}
\sum_{n=0}^\infty \sum_{m=0}^\infty \sum_{\ell=0}^\infty \frac{1}{2^{4(\ell + 2m + 3n)}} \frac{((2(\ell +2m + 3n))!)^2}{((\ell +2m + 3n)!)^3} \frac{1}{\ell ! m! n! (m+2n) !} p^\ell q^m r^n
\end{align*}
up to a constant factor.
\end{proof}

In the statement of Proposition \ref{PropSer},
although we can evaluate the  constant factor of the right hand side,
we do not give the precise expression of it.
It is not necessary for our argument below.

\begin{rem}\label{RemGM}
Griffin and Malmendier \cite{GM} 
study periods  for Kummer surfaces ${\rm Kum}(E_1 \times E_2)$ for products of two elliptic curves.
They study various elliptic fibrations on ${\rm Kum}(E_1 \times E_2)$
and obtain simple expressions of periods in terms of  the well-known Gauss or Appell hypergeometric functions.
In our paper,
we obtain more complicated expressions of a period in the above proposition for our family $\pi$ of all Kummer surfaces ${\rm Kum}(A)$ for principally polarized Abelian surfaces $A$.
We remark that the family of ${\rm Kum}(E_1 \times E_2)$ is corresponding to the subfamily of our family $\pi$ of (\ref{KummerFamily1}) restricted to the locus $\{r=0\}$. 
This is guaranteed by (\ref{pqrT}) and the fact that the Kummer surface (\ref{KummerCanonical}) degenerates to ${\rm Kum} (E_1 \times E_2)$ if and only if ${\bf t}_{10}=0$ (see \cite{NS}).
\end{rem}

\section{Explicit expression of Picard-Fuchs system for Kummer surfaces}

In this section, we will obtain an explicit description of the Picard-Fuchs system of the family of all Kummer surfaces for principally polarized Abelian surfaces as a subsystem of a certain GKZ hypergeometric equations.

\subsection{Application of GKZ hypergeometric systems to Kummer surfaces}

We start this subsection with a short survey of the theory of GKZ hypergeometric systems (\cite{GKZ}).
For an independent variable $\lambda$,  
let $ \theta_\lambda$ be the  Euler operator: $ \theta_\lambda=\lambda \frac{\partial}{\partial \lambda} $.
In this subsection, we regard $\mathbb{C}^n $ as the vector space of column vectors with $n$ entries over $\mathbb{C}$.

Let $A$ be an $(m+k)\times n$ matrix given by the form
\begin{align}\label{MatrixA}
A=
\begin{pmatrix}
1 &\cdots & 1 &0 &\cdots &0 &\cdots &0 &\cdots &0 \\  
0& \cdots&0 & 1 &\cdots & 1 & \cdots &0 &\cdots &0  \\
\vdots& &\vdots  &\vdots &     &\vdots &  &\vdots &  &\vdots  \\  
0&\cdots &0   &0&\cdots &0  & \cdots   & 1 &\cdots & 1 \\
a_1'& \cdots & a_{\ell_1}' & a_{\ell_1+1}' & \cdots & a_{\ell_2}' & \cdots & a_{\ell_{m-1}+1}' &\cdots& a_{\ell_m}'
\end{pmatrix},
\end{align}
where $a_j'=\begin{pmatrix} a_{1j} \\  \cdots \\ a_{kj}\end{pmatrix} \in \mathbb{C}^k$ and $\ell_m=n$. 
Also, we take
\begin{align}\label{VectorGamma}
\gamma
={}^t(\alpha_1 , \cdots , \alpha_m ,-\beta_1-1 , \cdots , -\beta_k -1) \in \mathbb{C}^n.
\end{align}
For $t=(t_1,\ldots,t_k)$, we set 
$t^{a_j'} = t_1^{a_{1j}} \cdots t_k^{a_{kj}}$.
Moreover, we set
$$
P_i (t; c) = \sum_{j=\ell_{i-1}+1}^{\ell_i} c_j t^{a_j'} \quad \quad (i\in\{1,\ldots,m\})
$$
for $c=(c_1,\ldots,c_n)$.
Here, $\ell_0$ stands for $0$.
Letting $\Delta$ be a twisted cycle, we have an integral
\begin{align}\label{IntGKZ}
F_\Delta (\alpha,\beta; P_1,\ldots,P_m)
=\int_{\Delta} P_1(t;c)^{\alpha_1} \cdots P_m(t;c)^{\alpha_m} t_1^{\beta_1} \cdots t_k^{\beta_k} dt_1 \wedge \cdots \wedge dt_k.
\end{align}
By virtue of \cite{GKZ},
we obtain a system of linear differential equations in the independent variables $c_1,\ldots, c_n$ satisfying the integral (\ref{IntGKZ}). 
From now on, we put
$\displaystyle \theta_j=c_j \frac{\partial}{\partial c_j}$.

\begin{prop}\label{PropGKZ}
Let the notation be as above.
The integral (\ref{IntGKZ}) is a solution of a holonomic system of linear differential equations
\begin{align}
&\displaystyle \sum_{j=\ell_{i-1}+1}^{\ell_i} \theta_j u =\alpha u,\label{GKZ1}\\
&\displaystyle \sum_{j=1}^n a_{\kappa  j } \theta_j =(-\beta_\kappa -1) u, \label{GKZ2}\\
&\displaystyle \prod_{j: b_j>0} \left(\frac{\partial}{\partial c_j}\right)^{b_j} u =  \prod_{j: b_j<0} \left(\frac{\partial}{\partial c_j}\right)^{-b_j} u \quad\quad\quad (b={}^t (b_1,\ldots,b_n) \in {\rm Ker}(A) \cap \mathbb{Z}^n).  \label{GKZ3}
\end{align}
\end{prop}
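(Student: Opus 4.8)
The plan is to reduce everything to identities satisfied by the integrand $\Phi(t;c)=P_1(t;c)^{\alpha_1}\cdots P_m(t;c)^{\alpha_m}\,t_1^{\beta_1}\cdots t_k^{\beta_k}$: the operators in (\ref{GKZ1}) and (\ref{GKZ3}) will act on $\Phi$ itself with the prescribed outcome, so that differentiating $F_\Delta$ of (\ref{IntGKZ}) under the integral sign gives those equations at once, while (\ref{GKZ2}) needs one extra integration by parts on $\Delta$. The holonomicity is not proved by hand but quoted from \cite{GKZ}: the configuration is homogeneous — the sum of the first $m$ rows of $A$ in (\ref{MatrixA}) is the all-ones vector — and that is exactly the case treated there. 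The only analytic input is that $\Delta$ is taken to be a twisted cycle adapted to the local system of $\Phi$, which legitimizes differentiation under the integral and makes the boundary term in the integration by parts vanish.

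For (\ref{GKZ1}): since $P_i(t;c)=\sum_{j=\ell_{i-1}+1}^{\ell_i}c_j t^{a_j'}$ is linear in the variables of the $i$-th block, $\frac{\partial}{\partial c_j}P_i=t^{a_j'}$ for $\ell_{i-1}<j\le\ell_i$ while $\frac{\partial}{\partial c_j}$ annihilates $P_{i'}$ for $i'\neq i$; hence $\theta_j\Phi=\alpha_i\,c_j t^{a_j'}P_i^{-1}\Phi$ when $\ell_{i-1}<j\le\ell_i$, and summing over the block gives $\sum_{j=\ell_{i-1}+1}^{\ell_i}\theta_j\Phi=\alpha_i P_i^{-1}\Phi\cdot\sum_{j=\ell_{i-1}+1}^{\ell_i}c_j t^{a_j'}=\alpha_i\Phi$. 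For (\ref{GKZ2}): a direct computation gives $\big(\sum_{j=1}^n a_{\kappa j}\theta_j\big)P_i=t_\kappa\frac{\partial}{\partial t_\kappa}P_i$ for each $i$ — both sides equal $\sum_{\ell_{i-1}<j\le\ell_i}a_{\kappa j}c_j t^{a_j'}$ — and therefore $\big(\sum_j a_{\kappa j}\theta_j\big)\Phi=t_\kappa\frac{\partial}{\partial t_\kappa}\Phi-\beta_\kappa\Phi$. Writing $t_\kappa\frac{\partial}{\partial t_\kappa}\Phi=\frac{\partial}{\partial t_\kappa}(t_\kappa\Phi)-\Phi$ and observing that $\frac{\partial}{\partial t_\kappa}(t_\kappa\Phi)\,dt_1\wedge\cdots\wedge dt_k=d\big((-1)^{\kappa-1}t_\kappa\Phi\,dt_1\wedge\cdots\wedge\widehat{dt_\kappa}\wedge\cdots\wedge dt_k\big)$ is exact, Stokes' theorem on $\Delta$ gives $\int_\Delta t_\kappa\frac{\partial}{\partial t_\kappa}\Phi\,dt=-F_\Delta$, whence $\big(\sum_j a_{\kappa j}\theta_j\big)F_\Delta=-F_\Delta-\beta_\kappa F_\Delta=(-\beta_\kappa-1)F_\Delta$.

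For (\ref{GKZ3}): fix $b={}^t(b_1,\ldots,b_n)\in{\rm Ker}(A)\cap\mathbb{Z}^n$ and split it into its positive and negative parts, which have disjoint supports. Applying blockwise the elementary identity $\prod_j\big(\tfrac{\partial}{\partial c_j}\big)^{\nu_j}\big(\sum_k c_k m_k\big)^{\alpha}=\alpha(\alpha-1)\cdots(\alpha-|\nu|+1)\big(\prod_j m_j^{\nu_j}\big)\big(\sum_k c_k m_k\big)^{\alpha-|\nu|}$ with $m_j=t^{a_j'}$, one obtains
\[
\prod_{j:\,b_j>0}\Big(\tfrac{\partial}{\partial c_j}\Big)^{b_j}\Phi=\Big(\prod_{i=1}^m(\alpha_i)_{(e_i)}\Big)\Big(\prod_{j:\,b_j>0}(t^{a_j'})^{b_j}\Big)\Big(\prod_{i=1}^m P_i^{\alpha_i-e_i}\Big)\,t_1^{\beta_1}\cdots t_k^{\beta_k},
\]
where $e_i=\sum_{\ell_{i-1}<j\le\ell_i,\,b_j>0}b_j$ and $(\alpha)_{(e)}:=\alpha(\alpha-1)\cdots(\alpha-e+1)$, together with the analogous expression for $\prod_{j:\,b_j<0}\big(\tfrac{\partial}{\partial c_j}\big)^{-b_j}\Phi$ in which $e_i$ is replaced by $e_i'=\sum_{\ell_{i-1}<j\le\ell_i,\,b_j<0}(-b_j)$. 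Since $b\in{\rm Ker}(A)$, the first $m$ rows of $A$ force $\sum_{\ell_{i-1}<j\le\ell_i}b_j=0$, i.e. $e_i=e_i'$, so the falling-factorial factors and the $P_i$-powers agree; the last $k$ rows force $\sum_{j=1}^n b_j a_j'=0$ in $\mathbb{C}^k$, so $\prod_{j:\,b_j>0}(t^{a_j'})^{b_j}=t^{\sum_{j:\,b_j>0}b_j a_j'}=t^{\sum_{j:\,b_j<0}(-b_j)a_j'}=\prod_{j:\,b_j<0}(t^{a_j'})^{-b_j}$. Hence the two sides coincide as functions of $(t;c)$, and integrating over $\Delta$ gives (\ref{GKZ3}).

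I expect the main obstacle to be not the algebra, which is routine, but the analytic bookkeeping around differentiation under the integral and the vanishing of the boundary term in the integration by parts of (\ref{GKZ2}); this is handled once and for all by working with the twisted cycle $\Delta$ and Stokes' theorem in twisted homology, so in the end there is essentially nothing to overcome beyond invoking that framework.
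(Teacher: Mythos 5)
Your proposal is correct, and it follows essentially the same route the paper takes (via \cite{GKZ} and the sketch in Remark \ref{RemRankGKZ}): the block-Euler relations (\ref{GKZ1}) and the toric relations (\ref{GKZ3}) are verified directly on the integrand using the $\ker(A)$ conditions, while (\ref{GKZ2}) is obtained by writing $\bigl(\sum_j a_{\kappa j}\theta_j+\beta_\kappa+1\bigr)$ applied to the integrand as an exact form and invoking Stokes on the twisted cycle, with holonomicity quoted from \cite{GKZ}. The computations and signs (in particular the $-\beta_\kappa-1$ on the right of (\ref{GKZ2})) all check out.
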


The above system is a particular GKZ hypergeometric system concordant with  the integral of (\ref{IntGKZ}).

\begin{rem} \label{RemRankGKZ}
According to the argument of  the original paper \cite{GKZ},
the equations (\ref{GKZ1}) and (\ref{GKZ3}) satisfy the integrand $\prod_{j=1}^m P_j(t; c)^{\alpha_j} \prod_{\kappa=1}^k t_\kappa^{\beta_\kappa}$  rather than its integral.  
We need a more delicate argument for (\ref{GKZ2}).
Let $\omega$ be the $k$-form such that $F_\Delta (\alpha,\beta; P_1,\ldots,P_m) =\int_\Delta \omega$.
For $\kappa \in \{1,\ldots,k\}$,
we can see that $(\sum_{j=1}^m a_{\kappa j} \theta_j +(\beta_\kappa +1))\omega$ is calculated to be
a special exact differential form given in the form of
$\eta_\kappa =d (\psi_\kappa dt_1 \wedge \ldots \wedge dt_{\kappa-1} \wedge dt_{\kappa+1} \wedge \ldots \wedge dt_k)$
for a certain function $\psi_\kappa$.
Since $\int_\Delta \eta_\kappa =0$, (\ref{GKZ2}) has a solution $u =\int_\Delta \omega$.
However, the GKZ hypergeometric equations  (\ref{GKZ1}), (\ref{GKZ2}) and (\ref{GKZ3})  do not attain all differential equations satisfying the integral of (\ref{IntGKZ}).
\end{rem}

We will apply the above theory  to periods for the family $\pi$ of   (\ref{KummerFamily1}).
Let us study a particular system of GKZ hypergeometric equations associated with
\begin{align}\label{AGammaK}
A_K
=\begin{pmatrix}
1& 1&0 &0 &0 &0 &0\\
0&0 &1& 1& 1& 1& 1 \\
1& 0&1&0&0&0&0 \\
0&2 &0&3&2&1&0
\end{pmatrix},
\quad\quad
\gamma_K=\begin{pmatrix} \frac{-1}{2} \\ \frac{-1}{2} \\ \frac{-1}{2} \\ -1 \end{pmatrix}.
\end{align}
From Proposition \ref{PropGKZ}, such a system has a solution 
\begin{align}\label{IntGKZKummer}
\int_\Delta t_1^{-\frac{1}{2}} (c_1 t_1 + c_2 t_2^2)^{-\frac{1}{2}}(c_3 t_1 + c_4 t_2^3 + c_5 t_2^2 +c_6 t_2 +c_7)^{-\frac{1}{2}} dt_1 \wedge dt_2.
\end{align}
Let us put
\begin{align}\label{pqr}
p=\frac{c_1 c_5}{c_2 c_3}, \quad q=\frac{c_1^2 c_4 c_6}{c_2^2 c_3^2} ,\quad r=\frac{c_1^3 c_4^2 c_7}{ c_2^3 c_3^3}.
\end{align}
Also, we put
$\displaystyle t_1=\frac{c_2^3 c_3^2}{c_1^3 c_4^2} x$ and $\displaystyle t_2=\frac{c_2 c_3}{c_1 c_4} t$.
We can see that this integral (\ref{IntGKZKummer}) derives the left hand side of (\ref{KummerPerSer}). 
We obtain
\begin{align}\label{thetapqr}
\theta_5=\theta_p,\quad \theta_6=\theta_q,\quad \theta_7=\theta_r
\end{align}
from (\ref{pqr}).
The differential equations  of (\ref{GKZ1}) and (\ref{GKZ2})  induce 
\begin{align}\label{PGKZ12}
\begin{cases}
&(\theta_1 + \theta_2 +\frac{1}{2}  )u=0, \\
& (\theta_3 +\theta_4+\theta_5 + \theta_6 +\theta_7 +\frac{1}{2})u =0, \\
& (\theta_1+\theta_3+\frac{1}{2}) u=0,\\
& (2\theta_2 +3\theta_4+ 2\theta_5 + \theta_6 +1) u=0.
\end{cases}
\end{align}
Hence, together with (\ref{pqr}),
we have the relations
\begin{align}\label{theta1234}
\begin{cases}
& \theta_1 u =(\theta_p + 2 \theta_q + 3 \theta_r) u \\
& \theta_2 u =\theta_3 u = -(\theta_p + 2 \theta_q + 3 \theta_r+\frac{1}{2} )u,\\
& \theta_4 u =(\theta_q+ 2 \theta_r) u.
\end{cases}
\end{align}
Next,
let us consider a holonomic system (\ref{GKZ3}) defined by $b\in {\rm Ker}(A) \cap \mathbb{Z}^4$. 
We study four differential equations
\begin{align}\label{PGKZpqr}
 \frac{\partial^2}{\partial c_5 \partial c_7} u = \frac{\partial^2}{\partial c_6^2} u , \hspace{3mm}
 \frac{\partial^2}{\partial c_4 \partial c_6} u = \frac{\partial^2}{\partial c_5^2} u , \hspace{3mm}
 \frac{\partial^2}{\partial c_1 \partial c_5} u = \frac{\partial^2}{\partial c_2 \partial c_3} u ,  \hspace{3mm}
 \frac{\partial^2 }{\partial c_4 \partial c_7} u = \frac{\partial^2 }{\partial c_5 \partial c_6} u
\end{align}
of order two,
determined by four vectors
$$
{}^t(0,0,0,0,1,-2,1), \hspace{2mm}  {}^t(0,0,0,1,-2,1,0),  \hspace{2mm}  {}^t(1,-1,-1,0,1,0,0),  \hspace{2mm}  {}^t(0,0,0,1,-1,-1,1) \in {\rm Ker}(A_K)\cap \mathbb{Z}^7. 
$$
Any other equations (\ref{GKZ3}) induced from  ${\rm Ker}(A_K)$  of (\ref{AGammaK})  are attributed to (\ref{PGKZpqr}).
Also, we remark that the relation
$\displaystyle \frac{\partial^2}{\partial c_j^2} =\theta_j (\theta_j -1)$ holds.
By using (\ref{pqr}), (\ref{thetapqr}) and (\ref{theta1234}),
the equations (\ref{PGKZpqr}) give
\begin{align}\label{GKZpqr}
\begin{cases}
&\displaystyle q^2 \theta_p \theta_r u= pr \theta_q (\theta_q -1)u  ,\\
&\displaystyle  p^2 \theta_q(\theta_q + 2 \theta_r) u =  q \theta_p(\theta_p -1) u, \\
&\displaystyle  \theta_p (\theta_p+2\theta_q + 3 \theta_r) u = p \left(\theta_p + 2\theta_q+3\theta_r+\frac{1}{2}\right)^2 u,\\
&\displaystyle  p q \theta_r(\theta_q+2\theta_r)u = r \theta_p \theta_q u.
\end{cases}
\end{align}
For example,
the second equation of (\ref{PGKZpqr}) is equal to 
$$
c_5^2 \theta_4 \theta_6 u = c_4 c_6 \theta_5(\theta_5 -1) u.
$$
By multiplying the both sides by $\displaystyle \frac{c_1^2}{c_2^2 c_3^2}$, 
we obtain the second equation of (\ref{GKZpqr}).

\subsection{Differential equation for Kummer surfaces beyond GKZ hypergeometic system}

As we saw in Remark \ref{RemRankGKZ}, 
the equations in (\ref{GKZpqr}) do not give all differential equations satisfying the period integrals for the family $\pi$ of (\ref{KummerFamily1}).
We can eliminate four of six operators $\theta_p^2 , \theta_q^2 , \theta_r^2 , \theta_p \theta_q  , \theta_q \theta_r  ,  \theta_r \theta_p  $
of rank two
by using the relations of (\ref{GKZpqr}).
We can directly check that 
the  system (\ref{GKZpqr}) is of order six, by calculating an integrable Pfaffian system for a basis, i.e. 
$\{u,\theta_p u, \theta_q u, \theta_r u, \theta_{p}^2 u, \theta_{q}^2 u\}$.

However, 
as we saw in Section 1.2,
the Picard-Fuchs system for the family $\pi$ of Kummer surfaces must be of holonomic rank five.
We need to obtain an equation beyond the GKZ hypergeometric system (\ref{GKZpqr})
in order to obtain exact Picard-Fuchs system.
We will obtain a new differential equation of order two
by making full use of the explicit power series expression (\ref{KummerPerSer}) of the period for $\pi$.

\begin{thm}\label{ThmPFS}
(1) The period integral of (\ref{KummerPerSer}) satisfies a partial linear differential equation
\begin{align}\label{NewDiffEq}
&9 q r \theta_p( 1+ 2 \theta_r) u 
-  4 p r \theta_q (2 \theta_q + 3 \theta_r)u 
-  4 p^2 q \theta_r (\theta_q + 2 \theta_r) u \notag \\
&\quad\quad +  4 p^2 r \theta_q (\theta_p + 4 \theta_q + 6 \theta_r) u 
+  p q^2 \theta_r (1+ 16 \theta_q  + 30 \theta_r)u =0.
\end{align}

(2) The system of partial linear differential equations (\ref{GKZpqr}) and (\ref{NewDiffEq}) gives a Picard-Fuchs system for the family $\pi$  of  (\ref{KummerFamily1}).
\end{thm}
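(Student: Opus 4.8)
The plan is to prove (1) and (2) separately. For part (1), I would start from the explicit power series $u_0(p,q,r)=\sum_{\ell,m,n\geq 0} A_{\ell,m,n}\, p^\ell q^m r^n$ of Proposition \ref{PropSer}, where
\[
A_{\ell,m,n}=\frac{1}{2^{4(\ell+2m+3n)}}\frac{((2(\ell+2m+3n))!)^2}{((\ell+2m+3n)!)^3}\frac{1}{\ell!\,m!\,n!\,(m+2n)!}.
\]
Since each $\theta$-operator acts diagonally on monomials, $\theta_p p^\ell q^m r^n=\ell\, p^\ell q^m r^n$ and similarly for $\theta_q,\theta_r$, the left-hand side of (\ref{NewDiffEq}) is again a power series whose $(\ell,m,n)$-coefficient is an explicit $\mathbb{Q}$-linear combination of $A_{\ell-1,m-1,n-1}$, $A_{\ell+1,m-1,n-1}$, $A_{\ell+2,m-1,n-1}$ and so on (the monomial shifts read off from the five terms $qr\theta_p(1+2\theta_r)$, $pr\theta_q(2\theta_q+3\theta_r)$, $p^2q\theta_r(\theta_q+2\theta_r)$, $p^2r\theta_q(\theta_p+4\theta_q+6\theta_r)$, $pq^2\theta_r(1+16\theta_q+30\theta_r)$). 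The task is then to verify that this linear combination of ratios $A_{\ell+a,m+b,n+c}/A_{\ell,m,n}$ — each of which is an elementary rational function of $\ell,m,n$ obtained by cancelling factorials, using $N:=\ell+2m+3n$ as the natural variable — vanishes identically. I would reduce everything over the common denominator and check that the resulting polynomial in $\ell,m,n$ is zero; this is a finite, purely algebraic identity. Since $u_0$ is (up to the constant) a genuine period, and the period sheaf is the solution sheaf of the Picard-Fuchs system, (\ref{NewDiffEq}) holding on $u_0$ together with the monodromy-invariance of the equation gives that it holds on all period integrals; but strictly for part (1) the power-series verification is all that is claimed.

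For part (2), I would argue that (\ref{GKZpqr}) and (\ref{NewDiffEq}) together form an integrable (holonomic) system of rank exactly five and conclude by the uniqueness of the Picard-Fuchs system. By the discussion in Section 2.1 the GKZ part (\ref{GKZpqr}) alone is integrable of rank six; a Pfaffian basis is $\{u,\theta_p u,\theta_q u,\theta_r u,\theta_p^2 u,\theta_q^2 u\}$. I would show that (\ref{NewDiffEq}), after using the four relations in (\ref{GKZpqr}) to eliminate the redundant second-order operators, becomes a nontrivial linear relation among these six basis vectors with coefficients in $\mathbb{C}(p,q,r)$ — i.e. it cuts the solution space down by exactly one dimension, to rank five. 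Concretely: express $\theta_r^2 u$, $\theta_p\theta_q u$, $\theta_q\theta_r u$, $\theta_r\theta_p u$ in terms of $\{u,\theta_p u,\theta_q u,\theta_r u,\theta_p^2 u,\theta_q^2 u\}$ via (\ref{GKZpqr}), substitute into (\ref{NewDiffEq}), and check the resulting vector of coefficients is not identically zero (it suffices to specialize at one generic point $(p_0,q_0,r_0)$). Then one must verify that the reduced rank-five Pfaffian system is integrable, $d\Omega=\Omega\wedge\Omega$, which is automatic because the full solution space of (\ref{GKZpqr}) and (\ref{NewDiffEq}) is spanned by the period integrals — a five-dimensional local system by Section 1.2 — so the companion connection it defines is flat. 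Finally, since the period mapping $\Phi_\mathcal{U}$ of (\ref{PeriodPhi}) has image in the five-dimensional $\mathcal{D}_{\rm Kum}$ and its components span the solution space, a rank-five integrable system annihilating all five periods must coincide with the Picard-Fuchs system (the Picard-Fuchs system is the maximal such system, and here the two rank-five local systems agree), proving (2).

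The main obstacle is the rank computation in part (2): one must show that adjoining (\ref{NewDiffEq}) genuinely drops the rank from six to five and not further, i.e. that (\ref{NewDiffEq}) is neither a consequence of (\ref{GKZpqr}) nor does it force extra relations. The downward direction (rank $\leq 5$) needs a careful, if mechanical, Gröbner-type reduction of (\ref{NewDiffEq}) modulo (\ref{GKZpqr}) and a check that the leading coefficient does not vanish identically; the upward direction (rank $\geq 5$) follows from the fact that the five period integrals $\int_{\psi^{-1}(\gamma_j)}\omega$ are linearly independent over $\mathbb{C}$ (the period map is an immersion onto an open subset of $\mathcal{D}_{\rm Kum}$, by the Torelli-type results cited for ${\bf M_{\rm Kum}}$-polarized $K3$ surfaces). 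Integrability, by contrast, comes for free once both rank bounds are in place, since the span of the periods furnishes an honest flat rank-five subbundle of the Gauss-Manin bundle. I would organize the write-up so that the tedious identity of part (1) is pushed into a computation over $\mathbb{Q}[\ell,m,n]$, and the rank-five claim of part (2) is reduced to a single evaluation of a $6\times 1$ coefficient matrix at a generic parameter value.
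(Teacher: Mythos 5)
Part (1) of your proposal is essentially the paper's argument: the author also verifies \eqref{NewDiffEq} by substituting the series $\mathcal{P}(p,q,r)$ of \eqref{KummerPerSer} and checking that the coefficient of $p^\ell q^m r^n$ reduces, after clearing the factorial ratios $A_{\ell+a,m+b,n+c}/A_{\ell,m,n}$ over a common denominator, to an explicit polynomial identity in $\ell,m,n$ (the paper records this identity in the proof). For part (2) you take a genuinely different route. The paper's proof is computational: it forms the $5\times 5$ Pfaffian $d\varphi=\Omega\varphi$ for $\varphi={}^t(u,\theta_p u,\theta_q u,\theta_r u,\theta_p^2 u)$ from \eqref{GKZpqr} and \eqref{NewDiffEq}, writes out $M_p,M_q,M_r$ explicitly (the Appendix), and verifies $d\Omega=\Omega\wedge\Omega$ by direct calculation; rank five is then read off from the integrable Pfaffian. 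You instead propose a two-sided rank bound: rank $\le 5$ because \eqref{NewDiffEq}, reduced modulo \eqref{GKZpqr}, is a nontrivial linear relation on the six-dimensional GKZ solution space, and rank $\ge 5$ from the linear independence of the five period integrals, with integrability then automatic. Your route avoids the enormous explicit matrices, which is attractive; the paper's route avoids any appeal to global properties of the period map or the monodromy.

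The genuine gap is in your lower bound. To conclude rank $\ge 5$ you need all five period integrals $\int_{\psi_{(p,q,r)}^{-1}(\gamma_j)}\omega$ to be solutions of \eqref{NewDiffEq}, but part (1) establishes this only for the single period over the torus cycle $\Delta$ of Proposition \ref{PropSer}. Your remedy --- ``monodromy-invariance of the equation gives that it holds on all period integrals'' --- is only valid if the monodromy orbit of that one solution spans the full five-dimensional transcendental period space; equivalently, you need (ir)reducibility information about the monodromy representation of the family $\pi$ on ${\bf A}_{\rm Kum}\otimes\mathbb{C}$, plus the fact that $\Delta$ has nonzero transcendental component. Neither is supplied, and without them the argument only yields rank $\ge 1$, so the claimed ``rank exactly five, hence equal to the Picard--Fuchs system'' does not follow. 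This is a fillable gap (the monodromy group here is essentially the Siegel modular group acting irreducibly on a five-dimensional representation), but it is a substantive input that must be stated and justified; the paper's brute-force verification of $d\Omega=\Omega\wedge\Omega$ is precisely what lets it sidestep this point. Relatedly, your claim that a nonvanishing coefficient vector for the reduced relation gives rank $\le 5$ is fine, but only because $\{u,\theta_p u,\theta_q u,\theta_r u,\theta_p^2 u,\theta_q^2 u\}$ is a genuine Pfaffian basis for \eqref{GKZpqr}, i.e.\ initial data at a generic point are freely specifiable --- you should say so explicitly, since that is where the rank-six statement from Section 2.2 enters.
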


\begin{proof}
(1) The equation (\ref{NewDiffEq}) is found by a method of indeterminate coefficients.
Let $\mathcal{P}(p,q,r)$ be the power series of the right hand side of  (\ref{KummerPerSer}). 
As we saw in the end of Section 2.1,
$u=\mathcal{P}(p,q,r)$ satisfies every equation of (\ref{GKZpqr}).
Let us consider a linear differential equation in the form
\begin{align*}
\sum_{\alpha,\beta,\gamma \geq 0} p^\alpha q^\beta r^\gamma (&a_{000}^{\alpha,\beta,\gamma} + a_{100}^{\alpha,\beta,\gamma} \theta_p  + a_{010}^{\alpha,\beta,\gamma} \theta_q + a_{001}^{\alpha,\beta,\gamma} \theta_r  \notag\\
&+a_{200}^{\alpha,\beta,\gamma}  \theta_{p}^2  
+a_{020}^{\alpha,\beta,\gamma} \theta_q^2 +a_{002}^{\alpha,\beta,\gamma} \theta_r^2 
+a_{110}^{\alpha,\beta,\gamma} \theta_p \theta_q +a_{011}^{\alpha,\beta,\gamma} \theta_q \theta_r +a_{101}^{\alpha,\beta,\gamma} \theta_p \theta_r) u =0
\end{align*}
of order two,
where $a_{\kappa,\mu,\nu}^{\alpha,\beta,\gamma}$ are constants.
We can determine an equation which is independent of the equations (\ref{GKZpqr}) and satisfies $u=\mathcal{P}(p,q,r)$.
Although we need a much heavy calculation in order to determine it, we are able to achieve the goal.
Thus, we obtain the equation (\ref{NewDiffEq}).

In practice,
it is possible to check that  $u=\mathcal{P}(p,q,r)$ is a solution of (\ref{NewDiffEq}) directly.
There is an identity
\begin{align*}
&9(2n-1)(m+2n-2)(\ell +2m +3n -4) 
- (2\ell +4m +6n -9)^2 (2m+3n-3)\\
&- (2\ell +4m +6n -9)^2 (\ell-1)
+4 (\ell-1)(\ell + 2m + 3n -4) (\ell +4m +6n -8)\\
&+(m-1) (\ell +2 m +3n -4) (16m+30n -31) =0
\end{align*}
If we substitute $\mathcal{P}(p,q,r)$ for $u$ in the equation (\ref{NewDiffEq}), 
the left hand side of this identity appears as the coefficient of $p^\ell q^m r^n$.

(2) Because of Lemma \ref{LemKum},  the family $\pi$ of  (\ref{KummerFamily1}) attains all Kummer surfaces for principally polarized Abelian surfaces.
Therefore, as we saw in the beginning of this subsection,
the Picard-Fuchs system for the family $\pi$ must be given by a system of linear partial differential equation of rank five.
It is enough to see that the system of (\ref{GKZpqr}) and (\ref{NewDiffEq}) is of rank five.
Set $\varphi={}^t (u,\theta_p u, \theta_q u, \theta_r u, \theta_{p}^2 u) $.
From (\ref{GKZpqr}) and (\ref{NewDiffEq}), we obtain a Pfaffian 
$$
\Omega= M_p dp + M_q dq + M_r dr
$$
satisfying $d\varphi =\Omega \varphi$.
We omit the concrete expression of $\Omega$ from this proof, because it is much complicated (for a precise description, see Appendix).
By a direct calculation, we can check that
$$
\frac{\partial}{\partial p} M_q - \frac{\partial}{\partial q} M_p =[M_p, M_q] ,\quad
\frac{\partial}{\partial q} M_r - \frac{\partial}{\partial r} M_q =[M_q, M_r] , \quad
\frac{\partial}{\partial r} M_p - \frac{\partial}{\partial p} M_r =[M_r, M_p] 
$$
hold.
Therefore, $\Omega$ satisfies $d\Omega = \Omega \wedge \Omega$.
This means that the system of linear partial differential equations (\ref{GKZpqr}) and (\ref{NewDiffEq}) is of rank five.
\end{proof}

\begin{cor}\label{CorSingular}
The singular loci of 
the  Picard-Fuchs system (\ref{GKZpqr}) and (\ref{NewDiffEq}) 
is given by the union of divisors
\begin{align}\label{SingularLoci}
&\{p=0\} \cup \{q=0\}\cup \{r=0\} \notag \\
&\cup \{-q^2 + 2 p q^2 - p^2 q^2 + 4 q^3 - 4 r + 12 p r - 12 p^2 r +  4 p^3 r + 18 q r - 18 p q r + 27 r^2 =0 \} \notag \\
&\cup \{ -p^2 q^2 + 4 q^3 + 4 p^3 r - 18 p q r + 27 r^2 =0\}
\end{align}
in $\mathbb{C}^3 ={\rm Spec}(\mathbb{C}[p,q,r])$.
\end{cor}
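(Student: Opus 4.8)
The plan is to read off the singular locus directly from the Pfaffian form of the system. By Theorem \ref{ThmPFS}, the system consisting of \eqref{GKZpqr} and \eqref{NewDiffEq} is integrable of holonomic rank five, and in the cyclic frame $\varphi={}^t(u,\theta_p u,\theta_q u,\theta_r u,\theta_p^2 u)$ it is equivalent to the meromorphic connection $d\varphi=\Omega\varphi$ with $\Omega=M_p\,dp+M_q\,dq+M_r\,dr$ recorded in the Appendix. The singular locus of the system is then the polar divisor of $\Omega$: after every entry of $M_p,M_q,M_r$ is reduced to lowest terms, it is cut out by the least common multiple $D(p,q,r)$ of their denominators. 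So the proof amounts to computing and factoring $D$.

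Carrying this out, I expect $D$ to factor, up to a nonzero constant and up to multiplicities, as
\[
D(p,q,r)=p^{a}\,q^{b}\,r^{c}\,D_1(p,q,r)\,D_2(p,q,r),
\]
where $D_1=-q^2+2pq^2-p^2q^2+4q^3-4r+12pr-12p^2r+4p^3r+18qr-18pqr+27r^2$ and $D_2=-p^2q^2+4q^3+4p^3r-18pqr+27r^2$ are exactly the two polynomials appearing in \eqref{SingularLoci}. Since each of $p,q,r,D_1,D_2$ is irreducible, reading off the reduced zero set $\{D=0\}=\{p=0\}\cup\{q=0\}\cup\{r=0\}\cup\{D_1=0\}\cup\{D_2=0\}$ gives the asserted union of divisors.

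To keep the answer transparent and to provide an independent check, I would identify $D_1$ and $D_2$ geometrically. For $K(p,q,r)$ in \eqref{KummerEq} the Weierstrass cubic in $x$ has roots $0$, $-t^2$ and $-(t^3+pt^2+qt+r)$, so its discriminant in $x$ is proportional to $t^4\,(t^3+pt^2+qt+r)^2\,(t^3+(p-1)t^2+qt+r)^2$; the configuration of singular fibres degenerates precisely when one of the two cubics $t^3+pt^2+qt+r$ or $t^3+(p-1)t^2+qt+r$ acquires a multiple root. The $t$-discriminants of these cubics reproduce, up to sign, exactly $D_2$ and $D_1$ respectively. The remaining components $\{p=0\},\{q=0\},\{r=0\}$ are inherited from the coordinate hyperplanes $\{c_5=0\},\{c_6=0\},\{c_7=0\}$ of the ambient GKZ system under the monomial substitution \eqref{pqr}.

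The only genuine difficulty is computational: producing $\Omega$ from the system of \eqref{GKZpqr} and \eqref{NewDiffEq}, clearing and factoring its common denominator, and checking that after reduction to lowest terms no spurious factor survives and that each of the five listed components genuinely occurs. The geometric identification of $D_1$ and $D_2$ with the discriminants of the two cubics, together with the GKZ origin of the coordinate hyperplanes, confirms that the divisors obtained are the correct ones and serves as a safeguard against errors in the denominator computation.
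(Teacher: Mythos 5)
Your overall strategy --- read the singular locus off the Pfaffian $\Omega=M_p\,dp+M_q\,dq+M_r\,dr$ of the Appendix --- is the same as the paper's, and your geometric identification of the two non-coordinate components with the $t$-discriminants of $t^3+pt^2+qt+r$ and $t^3+(p-1)t^2+qt+r$ matches the paper's remark following the corollary. But there is a genuine gap in the main step. You assert that the singular locus is cut out by the least common multiple $D$ of the reduced denominators of the entries of $M_p,M_q,M_r$, and you predict $D=p^aq^br^cD_1D_2$. That prediction is false: as the Appendix shows, the entries in the fifth row have denominator $d_1d_2d_3$ (up to monomials), where $d_1=-q^4+2pq^4-4q^2r+15pq^2r-15p^2q^2r+6q^3r+12pr^2-36p^2r^2+24p^3r^2-81r^3$ is a third irreducible factor, so the honest LCM is $p^aq^br^c\,d_1\,D_1\,D_2$. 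Your method, carried out literally, would therefore produce the extra component $\{d_1=0\}$, which does not appear in the statement.

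The missing idea is that the polar divisor of the connection matrix in one cyclic frame is only an upper bound for the singular locus of the holonomic system: it may contain apparent singularities where the chosen frame $\varphi={}^t(u,\theta_pu,\theta_qu,\theta_ru,\theta_p^2u)$ degenerates but the local system does not. This is exactly what happens with $d_1$. The paper disposes of it by recomputing the Pfaffian in a second frame, e.g.\ ${}^t(u,\theta_pu,\theta_qu,\theta_ru,\theta_q^2u)$, and observing that $d_1$ no longer occurs in the denominators (while a different spurious factor appears), so $\{d_1=0\}$ is not a singularity of the system; the factors $p,q,r,d_2,d_3$ persist in both frames. Without some such argument --- or another criterion distinguishing true from apparent singularities --- your proof cannot conclude, and as written it would in fact assert the wrong locus. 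Your discriminant computation does correctly flag $d_2$ and $d_3$ as genuine degeneration loci, but it gives no reason to exclude $d_1$, which is precisely the point requiring proof.
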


\begin{proof}
The singular loci of the system   (\ref{GKZpqr}) and (\ref{NewDiffEq})  appear in the denominator of the Pfaffian $\Omega$ in the proof of Theorem \ref{ThmPFS}.
In practice, the divisors in (\ref{SingularLoci}) are coming from the explicit expression of $\Omega$ described in Appendix.
In particular, the last two divisors of (\ref{SingularLoci}) are corresponding to $d_2 $ and $d_3$ in  the notation of Appendix.

However, under the notation in Appendix, $\{d_1=0\}$ does not give a singular locus of the Picard-Fuchs system.
We can check it as follows.
We can calculate another explicit form of a Pfaffian,  if we change the basis $\varphi={}^t (u,\theta_p u, \theta_q u, \theta_r u, \theta_{p}^2 u) $ to another one, e.g. ${}^t (u,\theta_p u, \theta_q u, \theta_r u, \theta_{q}^2 u) $.
Then, the factor $d_1$ does not appear and another factor newly appear in the denominator of the new Pfaffian.  
\end{proof}

We are able to explain the reason why the last two divisors appear in the singular loci.
Recall that the elliptic fibration $(x,y,t) \mapsto t$ on the surface (\ref{KummerEq}) has  singular fibres of Kodaira type $I_4+6 I_2 + I_2^*$.
The discriminant of the right hand side of (\ref{KummerEq}) in $x$ is equal to
$
t^4 (t^3+p t^2  + q t +r  )^2 (t^3  - t^2 + p t^2+ q t   + r )^2
$
up to a constant factor.
Set $R_2(t)=(t^3  - t^2 + p t^2+ q t   + r ) $ and $R_3(t) =(t^3+p t^2  + q t +r  ) $.
Then, the discriminant of $R_2(t)$ ($R_3(t)$, resp.) is calculated as $d_2$ ($d_3$, resp.)  up to a constant factor.
On these loci,
two of singular fibres of Kodaira type $I_2$ collapse into a singular fibre of type $I_4$.

\section*{Appendix: Pfaffian $\Omega$ }

We give an explicit expression of the Pfaffian $\Omega = M_p dp + M_q dq + M_r dr$ in the proof of Theorem \ref{ThmPFS}.
It satisfies $d \varphi = \Omega \varphi$
for $\varphi= {}^t  (u,\theta_p u, \theta_q u, \theta_r u, \theta_{p}^2 u)$.
We remark that the proof of the main theorem (Theorem \ref{ThmPFS}) is ground on the concrete expression of $\Omega$.

We set
{\small
\begin{align*}
&d_1=-q^4 + 2 p q^4 - 4 q^2 r + 15 p q^2 r - 15 p^2 q^2 r + 6 q^3 r + 12 p r^2 - 36 p^2 r^2 + 24 p^3 r^2 - 81 r^3,\\
 &d_2= -q^2 + 2 p q^2 - p^2 q^2 + 4 q^3 - 4 r + 12 p r - 12 p^2 r + 
 4 p^3 r + 18 q r - 18 p q r + 27 r^2,\\
 & d_3=-p^2 q^2 + 4 q^3 + 4 p^3 r - 18 p q r + 27 r^2.
\end{align*}
}

\noindent
The entries of $M_p = (\tilde{p}_{j,k})$ are given as follows:
{\scriptsize
\begin{align*}
&\tilde{p}_{11}=\tilde{p}_{13}=\tilde{p}_{14}=\tilde{p}_{15}=\tilde{p}_{21}=\tilde{p}_{22}=\tilde{p}_{23}=\tilde{p}_{24}=0,\quad  \tilde{p}_{12}=\tilde{p}_{25}=1,\\
&\tilde{p}_{31}= p q (-q^3 + 4 (-1 + 2 p) q r + 36 r^2)/(8d_1),\\
&\tilde{p}_{32}=q (-2 (p^2 - 4 q) q^3 + q (8 p^2 (-1 + 2 p) + 9 (2 - 5 p) q) r + 
    18 (4 + 2 p (-5 + 4 p) - 15 q) r^2)/(4p d_1),\\
&\tilde{p}_{33} =p (-2 q^4 + 11 (-1 + 2 p) q^2 r + 3 (-4 (1 - 2 p)^2 + 27 q) r^2)/  (2 d_1)\\
&\tilde{p}_{34} =p q (-7 q^3 + 12 (-1 + 2 p) q r + 216 r^2)/ (4 d_1), \\
&\tilde{p}_{35}=q ((p - p^2 - 4 q) q^3 + (-1 + 2 p) q (4 (-1 + p) p + 9 q) r + 
   9 (-4 - 12 (-1 + p) p + 15 q) r^2)/(2p d_1), \\
 &\tilde{p}_{41}= p r^2 (-4 (-1 + p) p - 15 q)  /(4d_1),\\
 &\tilde{p}_{42} =r ((2 - 4 p) q^3 - 6 (4 + p (-5 + 4 p)) q r + 108 q^2 r + 
   r (-8 (-1 + p) p^3 - 27 (-2 + p) r))/(2 p d_1), \\
&\tilde{p}_{43} = p r (q^3 - 2 p q^3 + 4 q r - 7 p q r + 7 p^2 q r - 36 q^2 r - 9 r^2 + 18 p r^2)/(q d_1),\\
&\tilde{p}_{44}= p r ((-4 + 8 p) q^2 - 24 (-1 + p) p r - 81 q r)/(2 d_1),\\
&\tilde{p}_{45}=r ((-1 + 2 p) q^3 + 6 (2 + 3 (-1 + p) p) q r - 54 q^2 r + 
   r (-4 (-1 + p)^2 p^2 + 27 (-1 + 2 p) r))/(p d_1),\\
&\tilde{p}_{51}=  -p^3 (2 q^8 - 2 p q^8 + 2 p^2 q^8 - 24 q^9 + 16 q^6 r - 49 p q^6 r + 
   51 p^2 q^6 r - 34 p^3 q^6 r - 190 q^7 r + 380 p q^7 r + 
   32 q^4 r^2 - 232 p q^4 r^2  \\
 &  + 456 p^2 q^4 r^2- 448 p^3 q^4 r^2 + 224 p^4 q^4 r^2 - 277 q^5 r^2 + 2124 p q^5 r^2 - 2124 p^2 q^5 r^2 - 966 q^6 r^2 - 272 p q^2 r^3 +  1104 p^2 q^2 r^3 - 1696 p^3 q^2 r^3   \\
&+ 1440 p^4 q^2 r^3 - 576 p^5 q^2 r^3 + 504 q^3 r^3 + 1320 p q^3 r^3 - 6984 p^2 q^3 r^3 + 4656 p^3 q^3 r^3 - 4239 q^4 r^3 + 8478 p q^4 r^3 + 384 p^2 r^4 - 1152 p^3 r^4    \\
 & + 1536 p^4 r^4  -  1152 p^5 r^4  + 384 p^6 r^4 + 432 q r^4 - 2592 p q r^4+  5184 p^3 q r^4 - 2592 p^4 q r^4 - 4968 q^2 r^4  + 24300 p q^2 r^4 - 24300 p^2 q^2 r^4   \\
 & + 3240 q^3 r^4 - 3888 r^5 + 18144 p r^5 - 31104 p^2 r^5 + 20736 p^3 r^5 + 8748 q r^5 - 17496 p q r^5 + 43740 r^6) /(8 d_1 d_2 d_3),\\
&\tilde{p}_{52}=(-5 p^3 q^8 + 7 p^4 q^8 - 6 p^5 q^8 - 32 q^9 + 128 p q^9 - 108 p^2 q^9 + 56 p^3 q^9 + 128 q^10 - 448 p q^10 - 40 p^3 q^6 r + 137 p^4 q^6 r - 164 p^5 q^6 r \\
&+ 99 p^6 q^6 r - 256 q^7 r +1484 p q^7 r - 2793 p^2 q^7 r + 2112 p^3 q^7 r - 904 p^4 q^7 r +  1280 q^8 r - 5760 p q^8 r + 6912 p^2 q^8 r - 768 q^9 r  -  80 p^3 q^4 r^2 \\ 
& + 600 p^4 q^4 r^2 - 1296 p^5 q^4 r^2+ 1336 p^6 q^4 r^2 - 624 p^7 q^4 r^2 - 512 q^5 r^2 + 4704 p q^5 r^2 -  15266 p^2 q^5 r^2 + 20762 p^3 q^5 r^2  - 14202 p^4 q^5 r^2  \\
&+  5076 p^5 q^5 r^2 + 2856 q^6 r^2 - 19536 p q^6 r^2 +  50853 p^2 q^6 r^2 - 36546 p^3 q^6 r^2 - 1728 q^7 r^2 - 4320 p q^7 r^2 + 656 p^4 q^2 r^3 - 2864 p^5 q^2 r^3   \\
&+  4800 p^6 q^2 r^3  - 4176 p^7 q^2 r^3 + 1584 p^8 q^2 r^3 +  3264 p q^3 r^3 - 22752 p^2 q^3 r^3 + 55872 p^3 q^3 r^3 -  63216 p^4 q^3 r^3 + 38976 p^5 q^3 r^3  \\
&- 11280 p^6 q^3 r^3 - 1728 q^4 r^3 - 5778 p q^4 r^3 + 74115 p^2 q^4 r^3 -154008 p^3 q^4 r^3 + 77868 p^4 q^4 r^3 + 9504 q^5 r^3 -  56376 p q^5 r^3 + 84564 p^2 q^5 r^3 \\
& - 960 p^5 r^4 + 3264 p^6 r^4 -  4800 p^7 r^4+ 3648 p^8 r^4 - 1152 p^9 r^4 - 6048 p^2 q r^4 +  35424 p^3 q r^4 - 69984 p^4 q r^4 + 67392 p^5 q r^4 -  34560 p^6 q r^4 \\
&  + 6912 p^7 q r^4 - 3456 q^2 r^4 + 29808 p q^2 r^4-  61128 p^2 q^2 r^4 - 30564 p^3 q^2 r^4 + 117612 p^4 q^2 r^4 -  44712 p^5 q^2 r^4 + 10368 q^3 r^4  \\
 & - 139320 p q^3 r^4 + 414720 p^2 q^3 r^4 - 324000 p^3 q^3 r^4 + 29160 q^4 r^4 -  32076 p q^4 r^4 + 18144 p r^5 - 104976 p^2 r^5 + 221616 p^3 r^5 -  225504 p^4 r^5    \\
&+ 119232 p^5 r^5 - 32400 p^6 r^5 - 69984 p q r^5 +  472392 p^2 q r^5 - 816480 p^3 q r^5 + 443232 p^4 q r^5 + 
 5832 q^2 r^5 - 196830 p q^2 r^5 + 137781 p^2 q^2 r^5 \\
 &+  104976 q^3 r^5 - 69984 r^6 - 69984 p r^6 + 559872 p^2 r^6 - 489888 p^3 r^6 + 314928 q r^6 - 157464 p q r^6 + 472392 r^7)
/(4d_1 d_2 d_3),\\
&\tilde{p}_{53}=-p^3 (3 q^9 - 48 q^{10} + 22 q^7 r - 41 p q^7 r - 9 p^2 q^7 r +  6 p^3 q^7 r - 364 q^8 r + 728 p q^8 r + 32 q^5 r^2 -  219 p q^5 r^2 + 179 p^2 q^5 r^2 + 80 p^3 q^5 r^2 \\
& - 40 p^4 q^5 r^2 - 451 q^6 r^2 + 3984 p q^6 r^2 - 3984 p^2 q^6 r^2 -  2208 q^7 r^2 - 32 q^3 r^3 - 232 p q^3 r^3 + 864 p^2 q^3 r^3 - 496 p^3 q^3 r^3 - 120 p^4 q^3 r^3  \\
& + 48 p^5 q^3 r^3 + 1209 q^4 r^3 + 2358 p q^4 r^3 - 14328 p^2 q^4 r^3 +   9552 p^3 q^4 r^3 - 10395 q^5 r^3 + 20790 p q^5 r^3 - 48 p q r^4 +  912 p^2 q r^4- 1824 p^3 q r^4   \\
&+ 1152 p^4 q r^4 - 288 p^5 q r^4 + 96 p^6 q r^4 + 1368 q^2 r^4 - 7128 p q^2 r^4 - 648 p^2 q^2 r^4 + 
   15552 p^3 q^2 r^4 - 7776 p^4 q^2 r^4 - 14472 q^3 r^4 + 
   72414 p q^3 r^4    \\
& - 72414 p^2 q^3 r^4+ 2268 q^4 r^4 + 1296 r^5 -  8640 p r^5 + 20736 p^2 r^5 - 22464 p^3 r^5 + 12960 p^4 r^5 - 
   5184 p^5 r^5 - 14256 q r^5 + 82620 p q r^5  \\
& - 162324 p^2 q r^5 + 108216 p^3 q r^5 + 4374 q^2 r^5 - 8748 p q^2 r^5 - 37908 r^6 + 139968 p r^6 - 139968 p^2 r^6 + 135594 q r^6)/(2q d_1 d_2 d_3),\\
&\tilde{p}_{54}=p^3 (-10 q^8 - 2 p q^8 + 2 p^2 q^8 + 168 q^9 - 80 q^6 r + 163 p q^6 r - 9 p^2 q^6 r + 6 p^3 q^6 r + 1266 q^7 r - 2532 p q^7 r - 160 q^4 r^2 + 984 p q^4 r^2  \\
& - 1104 p^2 q^4 r^2 + 240 p^3 q^4 r^2 - 120 p^4 q^4 r^2 + 1671 q^5 r^2 - 13140 p q^5 r^2 + 13140 p^2 q^5 r^2 + 5562 q^6 r^2 + 1200 p q^2 r^3 - 3744 p^2 q^2 r^3  \\
& + 2976 p^3 q^2 r^3  -720 p^4 q^2 r^3 + 288 p^5 q^2 r^3 - 2376 q^3 r^3 - 9720 p q^3 r^3 + 43416 p^2 q^3 r^3 - 28944 p^3 q^3 r^3 +    21789 q^4 r^3 - 43578 p q^4 r^3   \\
 & - 1728 p^2 r^4 + 3456 p^3 r^4- 1728 p^4 r^4 - 1296 q r^4 + 7776 p q r^4 + 15552 p^2 q r^4 - 
   46656 p^3 q r^4 + 23328 p^4 q r^4 + 26892 q^2 r^4 -  121500 p q^2 r^4    \\ 
 & + 121500 p^2 q^2 r^4 - 29160 q^3 r^4+ 23328 r^5  - 116640 p r^5 + 209952 p^2 r^5 - 139968 p^3 r^5 - 78732 q r^5 + 157464 p q r^5 - 236196 r^6)/(4 d_1 d_2 d_3),\\   
&\tilde{p}_{55}= -3 (-2 p^4 q^8 + 2 p^5 q^8 - 8 q^9 + 32 p q^9 - 32 p^2 q^9 + 16 p^3 q^9 + 32 q^{10} - 96 p q^{10} + p^3 q^6 r - 18 p^4 q^6 r +    49 p^5 q^6 r - 32 p^6 q^6 r - 64 q^7 r  \\
& + 374 p q^7 r -744 p^2 q^7 r + 604 p^3 q^7 r - 228 p^4 q^7 r + 320 q^8 r - 1344 p q^8 r + 1536 p^2 q^8 r - 192 q^9 r + 8 p^3 q^4 r^2 -  64 p^4 q^4 r^2 + 264 p^5 q^4 r^2  \\
& - 400 p^6 q^4 r^2 +  192 p^7 q^4 r^2-128 q^5 r^2 + 1200 p q^5 r^2 - 3996 p^2 q^5 r^2 + 5716 p^3 q^5 r^2 - 3876 p^4 q^5 r^2 + 1164 p^5 q^5 r^2 + 714 q^6 r^2 - 4896 p q^6 r^2   \\
& + 12024 p^2 q^6 r^2 - 8532 p^3 q^6 r^2- 432 q^7 r^2 - 432 p q^7 r^2 + 16 p^3 q^2 r^3 - 96 p^4 q^2 r^3 +    432 p^5 q^2 r^3 - 1120 p^6 q^2 r^3 + 1248 p^7 q^2 r^3 - 480 p^8 q^2 r^3 \\
&+ 864 p q^3 r^3 - 6048 p^2 q^3 r^3 +15504 p^3 q^3 r^3  - 18432 p^4 q^3 r^3  + 11088 p^5 q^3 r^3 - 2880 p^6 q^3 r^3 - 432 q^4 r^3 - 1917 p q^4 r^3 +   19008 p^2 q^4 r^3  \\
& - 38070 p^3 q^4 r^3 + 20034 p^4 q^4 r^3 + 2376 q^5 r^3 - 11340 p q^5 r^3 + 16848 p^2 q^5 r^3 - 384 p^6 r^4 
   +  1152 p^7 r^4 - 1152 p^8 r^4 + 384 p^9 r^4 - 1728 p^2 q r^4  \\
 &  + 10368 p^3 q r^4- 22464 p^4 q r^4 + 24192 p^5 q r^4 - 13824 p^6 q r^4 + 3456 p^7 q r^4 - 864 q^2 r^4 + 7128 p q^2 r^4 - 13608 p^2 q^2 r^4 - 9288 p^3 q^2 r^4    \\
&  + 32400 p^4 q^2 r^4 -14256 p^5 q^2 r^4 + 2592 q^3 r^4 - 30132 p q^3 r^4+ 89424 p^2 q^3 r^4 - 73872 p^3 q^3 r^4 + 7290 q^4 r^4 -10206 p q^4 r^4 + 3888 p r^5 \\
& - 23328 p^2 r^5 + 53136 p^3 r^5 -  62208 p^4 r^5 + 42768 p^5 r^5 - 15552 p^6 r^5-11664 p q r^5 + 93312 p^2 q r^5 - 186624 p^3 q r^5 + 116640 p^4 q r^5   \\
& + 1458 q^2 r^5 - 54675 p q^2 r^5 + 43740 p^2 q^2 r^5 +  26244 q^3 r^5 - 17496 r^6 + 104976 p^2 r^6 - 104976 p^3 r^6 +  78732 q r^6 - 78732 p q r^6\\
& + 118098 r^7) /(2 d_1 d_2 d_3).
\end{align*}
}

\noindent
The entries of $M_q = (\tilde{q}_{j,k})$ are given as  follows:
{\scriptsize
\begin{align*}
&\tilde{q}_{11}=\tilde{q}_{12}=\tilde{q}_{14}=\tilde{q}_{15} =0,  \quad \tilde{q}_{13}=1,\\
&\tilde{q}_{21}=p q (-q^3 + 4 (-1 + 2 p) q r + 36 r^2) /(8d_1),\\
&\tilde{q}_{22}=q (-2 (p^2 - 4 q) q^3 + q (8 p^2 (-1 + 2 p) + 9 (2 - 5 p) q) r + 
    18 (4 + 2 p (-5 + 4 p) - 15 q) r^2)/(4pd_1),\\
&\tilde{q}_{23}=p (-2 q^4 + 11 (-1 + 2 p) q^2 r + 3 (-4 (1 - 2 p)^2 + 27 q) r^2)/(2 d_1),\\
&\tilde{q}_{24}=p q (-7 q^3 + 12 (-1 + 2 p) q r + 216 r^2)/(4d_1),\\
&\tilde{q}_{25}=q ((p - p^2 - 4 q) q^3 + (-1 + 2 p) q (4 (-1 + p) p + 9 q) r + 9 (-4 - 12 (-1 + p) p + 15 q) r^2)/(2p d_1),\\
&\tilde{q}_{31}= q^2 r (-4 (-1 + p) p - 15 q) /(4d_1),\\
&\tilde{q}_{32}=q^2 ((2 - 4 p) q^3 - 6 (4 + p (-5 + 4 p)) q r + 108 q^2 r + r (-8 (-1 + p) p^3 - 27 (-2 + p) r))/(2p^2 d_1),\\
&\tilde{q}_{33}= -r (8 (-1 + p) p q^2 + 30 q^3 - 3 (-1 + 2 p) (4 (-1 + p) p + 3 q) r + 81 r^2)/d_1,\\
&\tilde{q}_{34}=(4 (-1 + 2 p) q^4 - 3 q^2 (8 (-1 + p) p + 27 q) r )/(2d_1),\\
&\tilde{q}_{35}=q^2 ((-1 + 2 p) q^3 + 6 (2 + 3 (-1 + p) p) q r - 54 q^2 r +  r (-4 (-1 + p)^2 p^2 + 27 (-1 + 2 p) r))/(p^2 d_1),\\
&\tilde{q}_{41}=q^2  r (4 (-1 + p) p + 15 q)/(8d_1),\\
&\tilde{q}_{42}=q r (2 q (4 (-1 + p) p^3 + (16 + 3 p (-10 + 9 p)) q - 60 q^2) - 3 (8 (-1 + p) p (-1 + 2 p) - 9 (-2 + p) q) r + 162 r^2)/(4p^2 d_1),\\
&\tilde{q}_{43}=r (8 (-1 + p) p q^2 + 30 q^3 - 3 (-1 + 2 p) (4 (-1 + p) p + 3 q) r + 81 r^2)/(2d_1),\\
&\tilde{q}_{44}=q^2 ((4 - 8 p) q^2 + 24 (-1 + p) p r + 81 q r)/(4d_1),\\
&\tilde{q}_{45}=q r (q (4 (-1 + p)^2 p^2 + (-16 - 33 (-1 + p) p) q + 60 q^2) + 3 (-1 + 2 p) (4 (-1 + p) p - 9 q) r - 81 r^2)/(2p^2 d_1),\\
&\tilde{q}_{51}=-p q (p^3 q^7 - p^4 q^7 - 4 q^8 + 8 p^2 q^8 + 16 q^9 + 10 p^3 q^5 r - 28 p^4 q^5 r + 18 p^5 q^5 r - 32 q^6 r + 57 p q^6 r + 74 p^2 q^6 r - 140 p^3 q^6 r + 136 q^7 r  \\
&- 200 p q^7 r +  48 p^3 q^3 r^2 - 224 p^4 q^3 r^2 + 320 p^5 q^3 r^2 - 144 p^6 q^3 r^2 - 64 q^4 r^2 + 264 p q^4 r^2 + 64 p^2 q^4 r^2 - 1108 p^3 q^4 r^2 + 980 p^4 q^4 r^2 + 405 q^5 r^2 \\
& - 1449 p q^5 r^2 + 1188 p^2 q^5 r^2 - 360 q^6 r^2 + 96 p^3 q r^3 - 480 p^4 q r^3 + 1024 p^5 q r^3 - 992 p^6 q r^3 + 352 p^7 q r^3 +  144 p q^2 r^3 - 432 p^2 q^2 r^3 \\
& - 1728 p^3 q^2 r^3+  4464 p^4 q^2 r^3 - 2592 p^5 q^2 r^3 + 360 q^3 r^3 - 828 p q^3 r^3 + 3942 p^2 q^3 r^3 - 1746 p^3 q^3 r^3 - 1242 q^4 r^3 - 4968 p q^4 r^3 - 576 p^2 r^4 \\
 &+ 1440 p^3 r^4  + 576 p^4 r^4 - 2592 p^5 r^4 + 1152 p^6 r^4 - 432 q r^4 + 2592 p q r^4 - 4752 p^2 q r^4 + 864 p^3 q r^4 - 1728 p^4 q r^4 +  2916 q^2 r^4- 15066 p q^2 r^4 \\
 &  + 32076 p^2 q^2 r^4 - 
   7047 q^3 r^4 + 3888 r^5 - 7776 p r^5 + 17496 p^2 r^5 - 
   13608 p^3 r^5 - 14580 q r^5 - 14580 p q r^5 - 26244 r^6)/(8d_1 d_2 d_3),\\
&\tilde{q}_{52}= q (-2 p^5 q^7 + 2 p^6 q^7 + 16 p q^8 - 46 p^2 q^8 + 56 p^3 q^8 -  32 p^4 q^8 - 32 q^9 - 96 p q^9 + 224 p^2 q^9 + 128 q^{10} -  20 p^5 q^5 r + 56 p^6 q^5 r - 36 p^7 q^5 r \\
&+ 128 p q^6 r  -   600 p^2 q^6 r + 1140 p^3 q^6 r - 1135 p^4 q^6 r + 533 p^5 q^6 r -  200 q^7 r - 556 p q^7 r + 3496 p^2 q^7 r - 3640 p^3 q^7 r +    864 q^8 r - 720 p q^8 r   \\
& - 96 p^5 q^3 r^2 + 448 p^6 q^3 r^2 -  640 p^7 q^3 r^2 + 288 p^8 q^3 r^2 + 256 p q^4 r^2 -   1952 p^2 q^4 r^2 + 5692 p^3 q^4 r^2 - 8808 p^4 q^4 r^2 +   7820 p^5 q^4 r^2 - 3248 p^6 q^4 r^2 \\
& - 576 q^5 r^2 +  1098 p q^5 r^2 + 8829 p^2 q^5 r^2 - 26334 p^3 q^5 r^2 +18810 p^4 q^5 r^2 + 3312 q^6 r^2 - 11304 p q^6 r^2 + 9216 p^2 q^6 r^2 - 2592 q^7 r^2  - 192 p^5 q r^3 \\
& + 960 p^6 q r^3 - 2048 p^7 q r^3 + 1984 p^8 q r^3 - 704 p^9 q r^3 -1152 p^2 q^2 r^3 + 8064 p^3 q^2 r^3 - 20880 p^4 q^2 r^3 +  29376 p^5 q^2 r^3 - 23040 p^6 q^2 r^3 \\
& + 7920 p^7 q^2 r^3 - 1152 q^3 r^3 + 5904 p q^3 r^3 - 4176 p^2 q^3 r^3 - 42660 p^3 q^3 r^3 + 85068 p^4 q^3 r^3 - 45504 p^5 q^3 r^3 + 8154 q^4 r^3 - 24273 p q^4 r^3 \\
& + 54729 p^2 q^4 r^3 - 27108 p^3 q^4 r^3 - 11664 q^5 r^3 - 33048 p q^5 r^3 + 2880 p^3 r^4 - 15552 p^4 r^4 + 33408 p^5 r^4 - 37440 p^6 r^4 + 21312 p^7 r^4   \\
& - 4608 p^8 r^4 - 864 p q r^4 + 5616 p^2 q r^4 - 42336 p^3 q r^4 + 128952 p^4 q r^4 - 140400 p^5 q r^4 + 52920 p^6 q r^4 - 3888 q^2 r^4 + 31104 p q^2 r^4  \\
&- 61236 p^2 q^2 r^4 + 23814 p^3 q^2 r^4- 28674 p^4 q^2 r^4 + 31590 q^3 r^4 - 178848 p q^3 r^4 + 303750 p^2 q^3 r^4 - 
52488 q^4 r^4 - 7776 r^5 + 42768 p r^5\\
& - 89424 p^2 r^5 +  29160 p^3 r^5 + 71928 p^4 r^5 - 50544 p^5 r^5 + 64152 q r^5 -  177876 p q r^5 + 392202 p^2 q r^5 - 204120 p^3 q r^5 -  118098 q^2 r^5 \\
& - 190269 p q^2 r^5 + 52488 r^6 - 367416 p r^6 +  459270 p^2 r^6 - 196830 q r^6) /(4p d_1 d_2 d_3), \\
&\tilde{q}_{53} =p (-p^2 q^8 - p^3 q^8 + 2 p^4 q^8 + 16 q^9 + 4 p q^9 - 40 p^2 q^9 -  64 q^{10} - 16 p^2 q^6 r + 33 p^3 q^6 r + 4 p^4 q^6 r -  21 p^5 q^6 r + 152 q^7 r - 295 p q^7 r  \\
& - 258 p^2 q^7 r + 570 p^3 q^7 r - 640 q^8 r + 968 p q^8 r - 80 p^2 q^4 r^2 +  292 p^3 q^4 r^2 - 192 p^4 q^4 r^2 - 204 p^5 q^4 r^2 + 184 p^6 q^4 r^2 + 448 q^5 r^2 - 2054 p q^5 r^2   \\
& + 1689 p^2 q^5 r^2+ 2695 p^3 q^5 r^2 - 3296 p^4 q^5 r^2 -  2508 q^6 r^2 + 8919 p q^6 r^2 - 7590 p^2 q^6 r^2 + 1728 q^7 r^2 -  128 p^2 q^2 r^3 + 528 p^3 q^2 r^3 - 560 p^4 q^2 r^3    \\
&- 864 p^5 q^2 r^3 + 1904 p^6 q^2 r^3 - 880 p^7 q^2 r^3 +  384 q^3 r^3 - 3168 p q^3 r^3 + 8112 p^2 q^3 r^3 +  240 p^3 q^3 r^3 - 15912 p^4 q^3 r^3 + 11496 p^5 q^3 r^3 - 3294 q^4 r^3\\
& + 13356 p q^4 r^3 - 32832 p^2 q^4 r^3 +  14904 p^3 q^4 r^3 + 4968 q^5 r^3 + 21006 p q^5 r^3 - 192 p^3 r^4 +  1152 p^4 r^4 - 3264 p^5 r^4 + 5376 p^6 r^4 - 4608 p^7 r^4   \\
& + 1536 p^8 r^4+ 288 p q r^4 + 432 p^2 q r^4 + 4320 p^3 q r^4 -  29952 p^4 q r^4 + 43632 p^5 q r^4 - 20736 p^6 q r^4 + 3024 q^2 r^4 - 18360 p q^2 r^4 + 31104 p^2 q^2 r^4  \\
& -11880 p^3 q^2 r^4 + 21168 p^4 q^2 r^4 - 20250 q^3 r^4 +  99387 p q^3 r^4 - 179010 p^2 q^3 r^4 + 32076 q^4 r^4 + 2592 r^5  -  18144 p r^5 + 40176 p^2 r^5  \\
& - 16848 p^3 r^5 - 31104 p^4 r^5 +  20736 p^5 r^5 - 29160 q r^5 + 83592 p q r^5 - 169128 p^2 q r^5 + 104976 p^3 q r^5 + 62694 q^2 r^5 + 143613 p q^2 r^5 \\
 & - 17496 r^6 + 157464 p r^6 - 244944 p^2 r^6+ 118098 q r^6) /(4 d_1 d_2 d_3),\\
& \tilde{q}_{54}= -p q (2 p^2 q^7 + p^3 q^7 - 3 p^4 q^7 - 28 q^8 - 8 p q^8 +72 p^2 q^8 + 112 q^9 + 56 p^3 q^5 r - 152 p^4 q^5 r +96 p^5 q^5 r - 160 q^6 r + 101 p q^6 r + 1050 p^2 q^6 r \\
&-1384 p^3 q^6 r + 696 q^7 r - 840 p q^7 r - 32 p^2 q^3 r^2 + 272 p^3 q^3 r^2 - 928 p^4 q^3 r^2 + 1248 p^5 q^3 r^2 -  560 p^6 q^3 r^2 - 192 q^4 r^2 + 840 p q^4 r^2 \\
& + 2142 p^2 q^4 r^2 - 9282 p^3 q^4 r^2 + 7140 p^4 q^4 r^2 + 1539 q^5 r^2 - 6885 p q^5 r^2 + 6696 p^2 q^5 r^2 - 1944 q^6 r^2 + 384 p^3 q r^3 -1536 p^4 q r^3 + 2880 p^5 q r^3  \\
 &- 2688 p^6 q r^3+ 960 p^7 q r^3 +   720 p q^2 r^3 - 2160 p^2 q^2 r^3 - 10800 p^3 q^2 r^3 + 26784 p^4 q^2 r^3 - 14688 p^5 q^2 r^3 + 2376 q^3 r^3 - 5616 p q^3 r^3 \\
& + 26082 p^2 q^3 r^3- 16254 p^3 q^3 r^3 - 9558 q^4 r^3 - 24948 p q^4 r^3 - 2592 p^2 r^4 + 3456 p^3 r^4 +  15552 p^4 r^4 - 28512 p^5 r^4 + 12096 p^6 r^4 - 1296 q r^4 \\
&+ 9072 p q r^4 - 21384 p^2 q r^4 + 8424 p^3 q r^4 -  14256 p^4 q r^4 + 11664 q^2 r^4 - 72900 p q^2 r^4 +  180792 p^2 q^2 r^4 - 41553 q^3 r^4 + 23328 r^5 - 52488 p r^5  \\
& + 122472 p^2 r^5- 99144 p^3 r^5 - 96228 q r^5 - 21870 p q r^5 - 157464 r^6)/(4 d_1 d_2 d_3),\\
&\tilde{q}_{55}=-q (-p^4 q^7 + 2 p^5 q^7 - p^6 q^7 + 12 p q^8 - 40 p^2 q^8 +    56 p^3 q^8 - 36 p^4 q^8 - 16 q^9 - 64 p q^9 + 144 p^2 q^9 +    64 q^{10} - 10 p^4 q^5 r  + 38 p^5 q^5 r  \\
 & - 46 p^6 q^5 r  + 18 p^7 q^5 r + 96 p q^6 r - 493 p^2 q^6 r + 1024 p^3 q^6 r -  1085 p^4 q^6 r + 522 p^5 q^6 r - 100 q^7 r - 432 p q^7 r + 2328 p^2 q^7 r - 2400 p^3 q^7 r \\
 & + 432 q^8 r - 288 p q^8 r -  48 p^4 q^3 r^2 + 272 p^5 q^3 r^2 - 544 p^6 q^3 r^2 +    464 p^7 q^3 r^2 - 144 p^8 q^3 r^2 + 192 p q^4 r^2  -  1512 p^2 q^4 r^2 + 4534 p^3 q^4 r^2   \\
& - 7020 p^4 q^4 r^2   + 6006 p^5 q^4 r^2 - 2328 p^6 q^4 r^2 - 288 q^5 r^2 +  288 p q^5 r^2 + 6246 p^2 q^5 r^2 - 16956 p^3 q^5 r^2 + 11763 p^4 q^5 r^2 + 1656 q^6 r^2   \\
& - 5832 p q^6 r^2 + 4968 p^2 q^6 r^2 - 1296 q^7 r^2 - 96 p^4 q r^3 + 576 p^5 q r^3 - 1504 p^6 q r^3 + 2016 p^7 q r^3 -1344 p^8 q r^3 + 352 p^9 q r^3 - 720 p^2 q^2 r^3  \\
& + 5328 p^3 q^2 r^3 - 14400 p^4 q^2 r^3  + 20448 p^5 q^2 r^3 -15984 p^6 q^2 r^3 + 5328 p^7 q^2 r^3 - 576 q^3 r^3 +  3456 p q^3 r^3 - 3240 p^2 q^3 r^3 - 24012 p^3 q^3 r^3  \\
& +  51408 p^4 q^3 r^3  - 27972 p^5 q^3 r^3 + 4077 q^4 r^3 - 14904 p q^4 r^3 + 34830 p^2 q^4 r^3 - 19116 p^3 q^4 r^3 - 5832 q^5 r^3 - 15552 p q^5 r^3  + 2016 p^3 r^4   \\
& - 12096 p^4 r^4 + 30240 p^5 r^4 - 40896 p^6 r^4+ 29376 p^7 r^4 - 8640 p^8 r^4  -  19008 p^3 q r^4 + 80352 p^4 q r^4 - 103680 p^5 q r^4 +  45360 p^6 q r^4 - 1944 q^2 r^4 \\
 &  + 13608 p q^2 r^4 -  25272 p^2 q^2 r^4  + 5832 p^3 q^2 r^4   - 21384 p^4 q^2 r^4 +  15795 q^3 r^4 - 91854 p q^3 r^4 + 178605 p^2 q^3 r^4 -  26244 q^4 r^4 - 3888 r^5   \\
 &+ 23328 p r^5 - 58320 p^2 r^5 + 42768 p^3 r^5 + 11664 p^4 r^5 - 11664 p^5 r^5 + 32076 q r^5 -  100602 p q r^5 + 266814 p^2 q r^5 - 196830 p^3 q r^5 - 59049 q^2 r^5 \\
& - 91854 p q^2 r^5 + 26244 r^6 - 196830 p r^6  +  314928 p^2 r^6   - 98415 q r^6)   /(2p d_1 d_2 d_3).
\end{align*}
}

\noindent
The entries of $M_r = (\tilde{r}_{j,k})$ are given as  follows:
{\scriptsize
\begin{align*}
&\tilde{r}_{11}=\tilde{r}_{12}=\tilde{r}_{13}=\tilde{r}_{15}=0, \quad  \tilde{r}_{14}=1,\\
&\tilde{r}_{21}=-p r^2 (-4 p + 4 p^2 + 15 q) /(4d_1 ), \\
&\tilde{r}_{22}=r ((2 - 4 p) q^3 - 6 (4 + p (-5 + 4 p)) q r + 108 q^2 r + r (-8 (-1 + p) p^3 - 27 (-2 + p) r))/(2p d_1),\\
&\tilde{r}_{23}=p r ((1 - 2 p) q^3 + (4 + 7 (-1 + p) p - 36 q) q r + 9 (-1 + 2 p) r^2)/(q d_1),\\
&\tilde{r}_{24}=p r ((-4 + 8 p) q^2 - 24 (-1 + p) p r - 81 q r)/(2d_1),\\
&\tilde{r}_{25}=r ((-1 + 2 p) q^3 + 6 (2 + 3 (-1 + p) p) q r - 54 q^2 r + r (-4 (-1 + p)^2 p^2 + 27 (-1 + 2 p) r))/(p d_1),\\
&\tilde{r}_{31}=q^2 r (4 (-1 + p) p + 15 q) /(8d_1),\\
&\tilde{r}_{32}=q r (2 q (4 (-1 + p) p^3 + (16 + 3 p (-10 + 9 p)) q - 60 q^2) - 3 (8 (-1 + p) p (-1 + 2 p) - 9 (-2 + p) q) r + 162 r^2)/(4p^2 d_1),\\
&\tilde{r}_{33}=r (8 (-1 + p) p q^2 + 30 q^3 - 3 (-1 + 2 p) (4 (-1 + p) p + 3 q) r +  81 r^2)/(2d_1),\\
&\tilde{r}_{34}= q^2 ((4 - 8 p) q^2 + 24 (-1 + p) p r + 81 q r)/(4d_1),\\
&\tilde{r}_{35}= q r (q (4 (-1 + p)^2 p^2 + (-16 - 33 (-1 + p) p) q + 60 q^2) +  3 (-1 + 2 p) (4 (-1 + p) p - 9 q) r - 81 r^2)/(2p^2 d_1),\\
&\tilde{r}_{41}= r ((p - p^2 - 4 q) q^2 + (-1 + 2 p) q r + 9 r^2)/(4d_1),\\
&\tilde{r}_{42}= r (q^2 (-2 (-1 + p) p^3 + (-8 + (15 - 14 p) p) q + 32 q^2) +  2 q (p (3 + 2 p (-5 + 4 p) - 9 q) + 9 q) r +  9 (2 + p (-5 + 4 p) - 12 q) r^2)/(2p^2 d_1),\\
&\tilde{r}_{43}= r (-2 q^3 ((-1 + p) p + 4 q) + (-1 + 2 p) q (3 (-1 + p) p + 5 q) r -  3 (1 - 2 p)^2 r^2)/(q d_1),\\
&\tilde{r}_{44}=((-1 + 2 p) q^4 - 2 q^2 (3 (-1 + p) p + 11 q) r + 3 (-1 + 2 p) q r^2 + 54 r^3)/(2 d_1),\\
&\tilde{r}_{45}=r (-q^2 (-(-1 + p)^2 + 4 q) (-p^2 + 4 q) + (-1 + 2 p) q (-2 (-1 + p) p + 9 q) r +  9 (-1 - 3 (-1 + p) p + 6 q) r^2)/(p^2 d_1),\\
&\tilde{r}_{51}=p r (p^3 q^6 - 2 p^4 q^6 + p^5 q^6 - 6 p q^7 + 10 p^2 q^7 - 10 p^3 q^7 + 40 p q^8 + 16 p^3 q^4 r - 56 p^4 q^4 r +  64 p^5 q^4 r - 24 p^6 q^4 r - 80 p q^5 r + 243 p^2 q^5 r \\
&-  311 p^3 q^5 r + 196 p^4 q^5 r + 120 q^6 r + 198 p q^6 r -  508 p^2 q^6 r - 480 q^7 r + 48 p^3 q^2 r^2 - 176 p^4 q^2 r^2 +   256 p^5 q^2 r^2 - 176 p^6 q^2 r^2 + 48 p^7 q^2 r^2  \\
& - 224 p q^3 r^2 + 832 p^2 q^3 r^2 - 1632 p^3 q^3 r^2 + 1704 p^4 q^3 r^2 - 776 p^5 q^3 r^2 + 480 q^4 r^2 + 564 p q^4 r^2 -  3294 p^2 q^4 r^2 + 3738 p^3 q^4 r^2 - 2160 q^5 r^2 \\
&-  594 p q^5 r^2 - 128 p^5 r^3 + 384 p^6 r^3 - 384 p^7 r^3 + 
   128 p^8 r^3 + 336 p^2 q r^3 - 960 p^3 q r^3 + 1536 p^4 q r^3 - 
   1296 p^5 q r^3 + 384 p^6 q r^3 + 216 p q^2 r^3 \\
&-  6696 p^2 q^2 r^3 + 13824 p^3 q^2 r^3 - 9936 p^4 q^2 r^3 +  810 q^3 r^3 - 1701 p q^3 r^3 + 13230 p^2 q^3 r^3 - 6480 q^4 r^3 -  864 p r^4 + 432 p^2 r^4 + 5184 p^3 r^4\\
& - 10368 p^4 r^4 +    5616 p^5 r^4 + 3240 q r^4 - 1296 p q r^4 + 5832 p^2 q r^4 - 6480 p^3 q r^4 - 14580 q^2 r^4 + 729 p q^2 r^4 + 5832 p^2 r^5 -   21870 q r^5)/(8 d_1 d_2 d_3),\\
 &\tilde{r}_{52}=  -r (-2 p^5 q^6 + 4 p^6 q^6 - 2 p^7 q^6 + 4 p^2 q^7 + 5 p^3 q^7 - 19 p^4 q^7 + 22 p^5 q^7 + 16 q^8 - 96 p q^8 + 76 p^2 q^8 -  88 p^3 q^8 - 64 q^9 + 448 p q^9 - 32 p^5 q^4 r \\
 &+ 112 p^6 q^4 r - 128 p^7 q^4 r + 48 p^8 q^4 r + 96 p^2 q^5 r - 234 p^3 q^5 r +   86 p^4 q^5 r + 284 p^5 q^5 r - 328 p^6 q^5 r - 128 q^6 r +  164 p q^6 r - 687 p^2 q^6 r \\
 & + 1000 p^3 q^6 r + 88 p^4 q^6 r + 1344 q^7 r - 96 p q^7 r - 1680 p^2 q^7 r - 3456 q^8 r -  96 p^5 q^2 r^2 + 352 p^6 q^2 r^2 - 512 p^7 q^2 r^2 +    352 p^8 q^2 r^2 - 96 p^9 q^2 r^2\\
 & + 320 p^2 q^3 r^2 - 912 p^3 q^3 r^2 + 768 p^4 q^3 r^2 + 816 p^5 q^3 r^2 -    1984 p^6 q^3 r^2 + 1184 p^7 q^3 r^2 - 768 q^4 r^2 +  1248 p q^4 r^2 - 738 p^2 q^4 r^2- 1116 p^3 q^4 r^2 \\
 & + 2886 p^4 q^4 r^2 - 3792 p^5 q^4 r^2 + 7452 q^5 r^2 -    4104 p q^5 r^2 - 16821 p^2 q^5 r^2 + 26082 p^3 q^5 r^2 -    18144 q^6 r^2 - 2592 p q^6 r^2 + 256 p^7 r^3 - 768 p^8 r^3  \\
 & + 768 p^9 r^3 - 256 p^10 r^3 - 96 p^3 q r^3 - 960 p^4 q r^3 + 
   3456 p^5 q r^3 - 4512 p^6 q r^3 + 2496 p^7 q r^3 - 384 p^8 q r^3 -  1728 p q^2 r^3 + 16848 p^2 q^2 r^3 \\
 &- 47736 p^3 q^2 r^3 +   70524 p^4 q^2 r^3 - 55296 p^5 q^2 r^3 + 20412 p^6 q^2 r^3 +  864 q^3 r^3 + 13068 p q^3 r^3 - 115830 p^2 q^3 r^3 + 211950 p^3 q^3 r^3  \\
 & - 141858 p^4 q^3 r^3 + 6480 q^4 r^3 -  30456 p q^4 r^3 + 114372 p^2 q^4 r^3 - 46656 q^5 r^3 +   6048 p^2 r^4 - 31104 p^3 r^4 + 67392 p^4 r^4 - 75168 p^5 r^4     \\
 & + 44928 p^6 r^4- 12096 p^7 r^4 - 5184 q r^4 + 14256 p q r^4 - 73872 p^2 q r^4 + 206064 p^3 q r^4 - 257904 p^4 q r^4 +111456 p^5 q r^4 + 49572 q^2 r^4   \\
 & - 49572 p q^2 r^4  +  43740 p^2 q^2 r^4 + 16038 p^3 q^2 r^4 - 131220 q^3 r^4 +  8748 p q^3 r^4 + 11664 r^5 - 29160 p r^5 - 61236 p^2 r^5 +  230364 p^3 r^5  \\
 & - 154548 p^4 r^5 - 17496 q r^5+ 34992 p q r^5 +    21870 p^2 q r^5 - 157464 q^2 r^5 - 78732 r^6 + 314928 p r^6)/(4p d_1 d_2 d_3),\\   
 &\tilde{r}_{53}= -p r (-2 p^2 q^7 + 6 p^3 q^7 - 6 p^4 q^7 + 2 p^5 q^7 + 8 q^8 -  21 p q^8 + 32 p^2 q^8 - 16 p^3 q^8 - 32 q^9 - 16 p q^9 - 16 p^2 q^5 r + 51 p^3 q^5 r - 46 p^4 q^5 r  \\
 & + 3 p^5 q^5 r  +  8 p^6 q^5 r + 64 q^6 r - 186 p q^6 r + 298 p^2 q^6 r - 
   270 p^3 q^6 r + 28 p^4 q^6 r - 560 q^7 r + 600 p q^7 r + 
   176 p^2 q^7 r + 1152 q^8 r - 32 p^2 q^3 r^2  \\
 & + 96 p^3 q^3 r^2- 48 p^4 q^3 r^2 - 200 p^5 q^3 r^2 + 320 p^6 q^3 r^2 - 
   136 p^7 q^3 r^2 + 128 q^4 r^2 - 384 p q^4 r^2 + 531 p^2 q^4 r^2 + 
   291 p^3 q^4 r^2 - 1434 p^4 q^4 r^2 \\
& + 1252 p^5 q^4 r^2- 1746 q^5 r^2 + 2100 p q^5 r^2 + 2790 p^2 q^5 r^2 - 
   6930 p^3 q^5 r^2 + 5040 q^6 r^2 + 360 p q^6 r^2 - 48 p^3 q r^3 +  368 p^4 q r^3 - 960 p^5 q r^3  \\
& + 1328 p^6 q r^3- 1008 p^7 q r^3   + 320 p^8 q r^3 + 96 p q^2 r^3 - 1536 p^2 q^2 r^3 +  5472 p^3 q^2 r^3 - 10968 p^4 q^2 r^3 + 11592 p^5 q^2 r^3 -  5328 p^6 q^2 r^3    \\
   & + 144 q^3 r^3 - 1854 p q^3 r^3  + 21105 p^2 q^3 r^3 - 44973 p^3 q^3 r^3 + 35226 p^4 q^3 r^3 -  3348 q^4 r^3 + 8424 p q^4 r^3 - 34560 p^2 q^4 r^3 +  15552 q^5 r^3   \\
 & - 432 p^2 r^4 + 2592 p^3 r^4 - 6336 p^4 r^4+ 8496 p^5 r^4 - 6624 p^6 r^4 + 2304 p^7 r^4 + 864 q r^4 -  2808 p q r^4 + 12312 p^2 q r^4 - 36936 p^3 q r^4   \\
 & +  52488 p^4 q r^4 - 26784 p^5 q r^4 - 12150 q^2 r^4 +13203 p q^2 r^4 - 13608 p^2 q^2 r^4 + 3888 p^3 q^2 r^4 + 
   37422 q^3 r^4 + 10692 p q^3 r^4 - 1944 r^5  \\
& + 3888 p r^5 + 19440 p^2 r^5 - 60264 p^3 r^5 + 42768 p^4 r^5+ 2916 q r^5 + 13122 p q r^5 - 51030 p^2 q r^5 + 52488 q^2 r^5 + 13122 r^6 \\
& -  52488 p r^6)  /(2q d_1 d_2 d_3),\\
&\tilde{r}_{54}=p r (-8 p^2 q^6 + 39 p^3 q^6 - 54 p^4 q^6 + 23 p^5 q^6 + 32 q^7 -   166 p q^7 + 246 p^2 q^7 - 182 p^3 q^7 - 128 q^8 + 536 p q^8 -  32 p^2 q^4 r + 216 p^3 q^4 r \\
& - 464 p^4 q^4 r + 408 p^5 q^4 r -   128 p^6 q^4 r + 128 q^5 r - 960 p q^5 r + 2069 p^2 q^5 r - 2065 p^3 q^5 r + 940 p^4 q^5 r + 72 q^6 r + 2586 p q^6 r -  3396 p^2 q^6 r \\
& - 2592 q^7 r + 304 p^3 q^2 r^2 - 1008 p^4 q^2 r^2 +  1184 p^5 q^2 r^2 - 560 p^6 q^2 r^2 + 80 p^7 q^2 r^2 -    1440 p q^3 r^2 + 4656 p^2 q^3 r^2 - 7056 p^3 q^3 r^2 \\
& +  5352 p^4 q^3 r^2  - 1608 p^5 q^3 r^2 + 2808 q^4 r^2 +  3564 p q^4 r^2 - 17658 p^2 q^4 r^2 + 17550 p^3 q^4 r^2 -  13392 q^5 r^2 - 1998 p q^5 r^2 - 192 p^4 r^3 + 192 p^5 r^3  \\
&  +  576 p^6 r^3 - 960 p^7 r^3 + 384 p^8 r^3 + 2160 p^2 q r^3 -  6048 p^3 q r^3 + 8640 p^4 q r^3 - 6480 p^5 q r^3 +    1728 p^6 q r^3 + 864 q^2 r^3 - 1296 p q^2 r^3 \\
&  -  37044 p^2 q^2 r^3   + 80892 p^3 q^2 r^3 - 58536 p^4 q^2 r^3 +  486 q^3 r^3 - 3483 p q^3 r^3 + 76626 p^2 q^3 r^3 - 34992 q^4 r^3 -   5184 p r^4 + 1296 p^2 r^4  \\
& + 38880 p^3 r^4 - 75168 p^4 r^4 + 40176 p^5 r^4 + 17496 q r^4 - 11664 p q r^4 + 52488 p^2 q r^4 -  58320 p^3 q r^4 - 84564 q^2 r^4 + 34263 p q^2 r^4 \\
& + 8748 p r^5  + 17496 p^2 r^5   - 118098 q r^5)/(4 d_1 d_2 d_3),\\
&\tilde{r}_{55}=r (-p^4 q^6 + 3 p^5 q^6 - 3 p^6 q^6 + p^7 q^6 + 8 p^2 q^7 -  24 p^3 q^7 + 34 p^4 q^7 - 18 p^5 q^7 + 8 q^8 - 48 p q^8 + 16 p^2 q^8 - 32 q^9 + 224 p q^9 - 16 p^4 q^4 r  \\
&+ 72 p^5 q^4 r-  120 p^6 q^4 r + 88 p^7 q^4 r - 24 p^8 q^4 r + 128 p^2 q^5 r - 
 592 p^3 q^5 r + 1164 p^4 q^5 r - 1144 p^5 q^5 r + 444 p^6 q^5 r - 64 q^6 r + 106 p q^6 r  \\
& - 792 p^2 q^6 r  + 1908 p^3 q^6 r - 1188 p^4 q^6 r + 672 q^7 r - 144 p q^7 r - 576 p^2 q^7 r - 
   1728 q^8 r - 48 p^4 q^2 r^2 + 224 p^5 q^2 r^2 - 432 p^6 q^2 r^2    \\
 & + 432 p^7 q^2 r^2 - 224 p^8 q^2 r^2+ 48 p^9 q^2 r^2 +  384 p^2 q^3 r^2 - 1952 p^3 q^3 r^2 + 4688 p^4 q^3 r^2 -  6288 p^5 q^3 r^2 + 4560 p^6 q^3 r^2 - 1392 p^7 q^3 r^2  \\
 &  -  384 q^4 r^2   + 720 p q^4 r^2  - 1665 p^2 q^4 r^2 + 4422 p^3 q^4 r^2 - 6012 p^4 q^4 r^2 + 2007 p^5 q^4 r^2 + 
   3726 q^5 r^2 - 2808 p q^5 r^2 - 7560 p^2 q^5 r^2   \\
& + 13716 p^3 q^5 r^2 - 9072 q^6 r^2 + 128 p^6 r^3 - 512 p^7 r^3+  768 p^8 r^3 - 512 p^9 r^3 + 128 p^10 r^3 - 384 p^3 q r^3 + 1152 p^4 q r^3 - 1152 p^5 q r^3   \\
& - 192 p^6 q r^3 + 1152 p^7 q r^3 -    576 p^8 q r^3 - 864 p q^2 r^3 + 9288 p^2 q^2 r^3 - 27432 p^3 q^2 r^3 + 43200 p^4 q^2 r^3 - 39312 p^5 q^2 r^3 +  17712 p^6 q^2 r^3  \\
 & + 432 q^3 r^3 + 5400 p q^3 r^3 -    62856 p^2 q^3 r^3 + 128952 p^3 q^3 r^3  - 96552 p^4 q^3 r^3 + 3240 q^4 r^3 - 10692 p q^4 r^3 + 62208 p^2 q^4 r^3 -  23328 q^5 r^3    \\
&  + 3888 p^2 r^4 - 21600 p^3 r^4 + 51408 p^4 r^4 - 67392 p^5 r^4 + 50544 p^6 r^4  - 16848 p^7 r^4 - 2592 q r^4 +  7776 p q r^4 - 42768 p^2 q r^4 + 137376 p^3 q r^4   \\
& -  194400 p^4 q r^4 + 97200 p^5 q r^4 + 24786 q^2 r^4 - 29889 p q^2 r^4 + 26973 p^2 q^2 r^4 - 12393 p^3 q^2 r^4 -  65610 q^3 r^4 + 21870 p q^3 r^4 + 5832 r^5   \\
&  - 23328 p r^5 -  17496 p^2 r^5 + 122472 p^3 r^5 - 87480 p^4 r^5 - 8748 q r^5 + 52488 p q r^5 - 26244 p^2 q r^5 - 78732 q^2 r^5 - 39366 r^6\\
&  +    216513 p r^6)/(2p d_1 d_2 d_3).    
\end{align*}
}

\section*{Acknowledgment}
The author would like to express his special gratitude to  Professor Hironori Shiga for valuable discussions about hypergeometric functions and $K3$ surfaces.
This work is supported by JSPS Grant-in-Aid for Scientific Research (22K03226)
and JST FOREST Program (JPMJFR2235).

{\small

}

\vspace{5mm}
\begin{center}
\hspace{10cm}\textit{Atsuhira  Nagano}\\
\hspace{10cm}\textit{Faculty of Mathematics and Physics}\\
\hspace{10cm} \textit{Institute of Science and Engineering}\\
\hspace{10cm}\textit{Kanazawa University}\\
\hspace{10cm}\textit{Kakuma, Kanazawa, Ishikawa}\\
\hspace{10cm}\textit{920-1192, Japan}\\
\hspace{10cm}\textit{(E-mail: atsuhira.nagano@gmail.com)}
\end{center}

\end{document}